\documentclass[a4paper]{article}
\usepackage[pages=all, color=black, position={current page.south}, placement=bottom, scale=1, opacity=1, vshift=5mm]{background}\SetBgContents{}
\usepackage[margin=1in]{geometry} % full-width
\RequirePackage{fix-cm}
\usepackage{amsmath,amsfonts,amssymb,epsfig}
\usepackage{graphicx}
\usepackage{amssymb}
\usepackage{mathptmx}      % use Times fonts if available on your TeX system
\usepackage{mathtools} 
\usepackage{algorithm}
\usepackage[noend]{algpseudocode}
\usepackage{faktor}

\usepackage[T1]{fontenc}
\usepackage[utf8]{inputenc}
\usepackage[english]{babel}
\parindent=0cm
\usepackage{setspace}
\usepackage{amsthm}  
\usepackage{array}
\usepackage{float}
\usepackage{graphicx}
\usepackage{bm}
\usepackage{fancyhdr}
\usepackage{longtable}
\usepackage{epstopdf}
\usepackage{hyperref}
\usepackage{tikz}
\usepackage{pgfplots}
\usepackage{color}
\usepackage{float}
\usepackage{multicol}
\usepackage{subcaption}
% insert here the call for the packages your document requires
\usepackage{latexsym}
\usepackage{hyperref}

\newtheorem{remark}{Remark}
\newtheorem{thm}{Theorem}
\newtheorem{cor}[thm]{Corollary}
\newtheorem{lem}[thm]{Lemma}

\newtheorem{prop}[thm]{Proposition}
\newtheorem{assumption}{Assumption}
\newtheorem{defn}{Definition}%[section]
%[section]
%[section]

%\theoremstyle{remark}
%\newtheorem{rem}{Remark}
\def\ItemNN$#1${\item $\displaystyle#1$}

\DeclareMathOperator*{\argmin}{argmin}

\DeclareMathOperator*{\osc}{osc}
\DeclareMathOperator*{\Span}{span}

\makeatletter
\def\BState{\State\hskip-\ALG@thistlm}
\makeatother

% Insert the name of "your journal" with
% \journalname{myjournal}

\usepackage{enumitem} 
%%%%%
\addtolength{\parindent}{2em}

%%%%%%%%%%%%

\usepackage{marginnote}

\setlength{\marginparwidth}{2cm}

% \renewcommand{\marginnote}[1]{}

%%%%%%%%%%%%

%%%%
\newcommand{\Vx}{\mathbf{x}}

\newcommand{\Vb}{\mathbf{b}}
\newcommand{\Vw}{\mathbf{w}}

\newcommand{\Vz}{\mathbf{z}}

\title{Robust Variational Physics-Informed Neural Networks}

\author{ Sergio Rojas $^a$     
        \and
        Pawe\l~Maczuga $^b$
        \and 
        Judit Mu\~noz-Matute$^{c,d}$
        \and
        David Pardo$^{e,c,f}$
        \and
        Maciej Paszy\'nski$^b$
}

\date{$^a$ Instituto de Matem\'aticas,  
              Pontificia Universidad Cat\'olica de Valpara\'iso, Chile. 
              \texttt{sergio.rojas.h@pucv.cl.}
             \\
           $^b$ AGH University of Krakow, Poland.\\
           $^c$ Basque Center for Applied Mathematics (BCAM), Spain. \\
           $^d$ Oden Institute for Computational Engineering and Sciences, The University of Texas at Austin, USA.\\
           $^e$ University of the Basque Country (UPV/EHU), Spain.\\
           $^f$ Ikerbasque, Spain.\\[2ex]  \today}
\begin{document}
\maketitle
\begin{abstract}
We introduce a Robust version of the Variational Physics-Informed Neural Networks method (RVPINNs). As in VPINNs, we define the quadratic loss functional in terms of a Petrov-Galerkin-type variational formulation of the PDE problem: the trial space is a (Deep) Neural Network (DNN) manifold, while the test space is a finite-dimensional vector space. Whereas the VPINN's loss depends upon the selected basis functions of a given test space, herein, we minimize a loss based on the discrete dual norm of the residual. The main advantage of such a loss definition is that it provides a reliable and efficient estimator of the true error in the energy norm under the assumption of the existence of a local Fortin operator. We test the performance and robustness of our algorithm in several advection-diffusion problems. These numerical results perfectly align with our theoretical findings, showing that our estimates are sharp.\\

\noindent\textit{Keywords}: Robustness, Variational Physics-Informed Neural Networks, Petrov-Galerkin formulation, Riesz representation, Minimum Residual principle, a posteriori error estimation, quasi-minimizers. 
\end{abstract}
\setcounter{tocdepth}{3}
\tableofcontents
\section{Introduction}
The remarkable success of Deep Learning (DL) algorithms across different scientific areas \cite{hinton2012deep,krizhevsky2017imagenet,gheisari2017survey} in the last decade has recently led to explore the potential of this discipline to address classical problems in physics and mathematics. These problems include approximating the solutions of Partial Differential Equations (PDEs) employing (deep) Neural Networks (NN), which can accurately approximate continuous functions. The exponential growth of interest in these techniques started with the Physics Informed Neural Networks (PINNs) (\cite{raissi2019physics}). This method incorporates the governing physical laws the PDE describes in the learning process. Then, the network is trained on a dataset that consists of a random selection of points in the physical domain and its boundary. PINNs have been successfully applied to solve a wide range of problems in scientific computing, including fluid mechanics \cite{cai2021physics,mao2020physics}, wave propagation \cite{aldirany2023multi,rasht2022physics}, or inverse problems \cite{chen2020physics,mishra2022estimates}, among many others. However, the loss function in PINNs is given by the strong form of the PDE residual, and it is known that in some problems with low regularity of the data, the solution only makes sense in a variational form. Therefore, PINNs fails to provide accurate solutions in those cases \cite{krishnapriyan2021characterizing}. 

A natural continuation to PINNs addressing the aforementioned limitation is the so-called Variational PINNs (VPINNs) \cite{kharazmi2019variational}. Here, the authors introduce a variational formulation of the underlying PDE within a Petrov-Galerkin framework. The solution is then approximated by a (deep) NN, whereas the test functions belong to linear vector spaces. Finally, the authors define a variational loss function to minimize during the training process. However, this methodology was not as popular as its predecessor. The main reason might be that the approach is sensible to the choice of the basis test functions. That is, given a discrete test space, the method's loss, stability, and robustness heavily depends upon the choice of the basis functions of the given test space. In \cite{Berrone2022Solving}, authors present an a posteriori error analysis for discretizing elliptic boundary-value problems with VPINNs employing piecewise polynomials for the test space. As the loss function in VPINNs is, in general, not robust with respect to the true error (for example, the loss function can tend to zero even if the true error does not), they provide an error estimator employing classical techniques in finite element analysis to obtain practical information on the quality of the approximation. They also study in \cite{berrone2022variational} the importance of {employing} appropriate quadrature rules and the degree of piecewise polynomial test functions to guarantee optimal convergence rates for VPINNs when performing mesh refinements. Finally, in \cite{kharazmi2021hp}, the same authors from VPINNs introduced \textit{hp}-VPINNs. In the latter, they employ piecewise polynomials for testing and allow for $hp$-refinements via domain decomposition by selecting non-overlapping test functions. However, the aforementioned issue of {adopting} an appropriate loss function for each problem remains also in this approach. 

The strategy we propose in this article overcomes the aforementioned limitation by following the core ideas introduced in Minimum Residual (MinRes) methods. The latter is a class of numerical methods for solving PDEs that guarantee stability. The spirit of MinRes methods is to minimize the dual norm of the residual of the PDE given in variational form. Many strategies have been developed based on this idea over the last five decades, including the families of Galerkin Least-Squares methods \cite{bochev2009least,jiang1998least,hughes1989new}, First Order Least-squares \cite{cai1994first,cai1997first}, residual minimization methods on dual discontinuous Galerkin norms \cite{calo2020adaptive,cier2021automatically,cier2021nonlinear,rojas2021goal,kyburg2022incompressible}, isogeometric residual minimization methods \cite{calo2021isogeometric,los2020isogeometric,los2021isogeometric,los2021dgirm}, and Discontinuous Petrov-Galerkin (DPG) methods \cite{DEMKOWICZ20101558,demkowicz2013robust,demkowicz2014overview,munoz2021dpg,roberts2014dpg}. As the residual operator lives in the dual of the test space and the dual norm is difficult to compute, the use of the Riesz representation theorem \cite{oden2017applied} is natural in this context. The latter maps elements of the dual space into elements on the test space. Therefore, in many of the aforementioned methods, instead of minimizing the residual in the dual norm, they minimize its Riesz representative (which is an element of the test space) in a given test norm. 

In this work, we revisit the initial work on VPINNs from Kharazmi et al. in \cite{kharazmi2019variational}, and we introduce a Robust version of Variational Physics-Informed Neural Networks (RVPINNs). As in VPINNs, we consider a Petrov-Galerkin formulation of the PDE, approximate the solution employing NNs, and {introduce} a finite-dimensional test space. Our goal is to minimize the residual in the discrete dual norm. Therefore, we {introduce} a single test function that is the Riesz representation of the weak residual over the discrete test space. We {define} an inner product and span the Riesz representative on a discrete basis so the resulting loss function will include the inverse of the Gram matrix corresponding to the selected inner product. We prove that the norm of such test function is efficient and reliable, i.e., the norm of the true error is bounded from below and above (up to an oscillation term) by the norm of the residual representative. Moreover, as we show in the numerical results, our strategy is insensitive (unlike VPINNs) to the choice of the discrete basis spanning the test space (assuming that the numerical integration and the inversion of the Gram matrix are sufficiently accurate). It is easy to see that minimizing the proposed loss functional is equivalent to minimizing the test norm of the Riesz representation of the residual, which is the idea behind classical MinRes methods. Summarizing, we provide a general mathematical framework to define robust loss functionals in VPINNs. Our strategy relies on two ideas: (a) the appropriate selection of the inner product in the test space, ensuring the stability of the variational formulation, and (b) the {adoption} of a single test function, that is the Riesz representation of the weak residual. 

In particular, if we {consider} an orthonormal discrete basis with respect to the inner product in the test space, the Gram matrix becomes the identity, and we recover the original definition of the loss functional in VPINNs. The difference with classical VPINNs is that our strategy is robust and independent of the choice of the basis functions. Other works employ similar ideas closely related to minimum residual methods \cite{cai2022least,brevis2022neural}. {In \cite{ainsworth2021galerkin}, the authors approximate the solution of symmetric and positive definite problems employing the discrete weak residual of the variational problem. For that, they define an adaptive construction of a sequence of finite-dimensional subspaces whose basis functions are realizations of a sequence of neural networks.} In \cite{taylor2023deep}, the authors minimize the dual norm of the weak residual for symmetric variational formulations in $H^{1}$ and rectangular domains via a spectral decomposition. In \cite{taylor2023deep2}, the same authors extend this method to time-harmonic Maxwell equations in the context of symmetric $H(curl)$-formulations. Also, in \cite{uriarte2023deep}, in the context of Petrov-Galerkin formulations, the authors minimize the dual norm of the residual based on the concept of optimal testing form \cite{demkowicz2014overview}. They approximate both the solution of the variational problem and the corresponding optimal test functions employing NNs by solving two nested deep Ritz problems. More recently, in~\cite{badia2024finite}, the authors interpolate the neural networks onto finite element spaces to represent the (partial) unknowns. This strategy overcomes, in particular, the challenges related to the imposition of boundary conditions and approximations of the NN derivatives during training.

Finally, apart from the usual limitations of DL-based technologies (optimizer, integration, etc.), the largest bottleneck of RVPINNs at the moment is that in certain configurations, we need to invert the Gram matrix corresponding to the basis and inner product {of} the test space. We will explore how to optimize this step in the future. However, there are particular cases where the inversion of the Gram matrix is trivial (as in the spectral case) or easy to compute. For example, if we consider a strong variational formulation, the test inner product is $L^2$, and {employing} piecewise discontinuous polynomials for testing the Gram matrix becomes block diagonal. On the other hand, for parametric problems, the Gram matrix may remain the same for different values of the parameters. Therefore, the inversion can be done offline, being valid for a large class of problems. Parametric problems are essential to solve inverse problems \cite{gao2022physics}, so RVPINNs could be of great interest in this area. 

The article is organized as follows: In Section \ref{Sec:Pre}, we introduce the variational formulation of the model problem we consider in this article, the NN framework for approximation of the solution, and a brief overview of VPINNs. We also provide an alternative definition of VPINNs employing a single test function in the loss. Section \ref{Sec:RVPINNs} presents the methodology of RVPINNs and the connection with other methods based on VPINNs. Section \ref{sec:aposteriori} is devoted to the derivation of robust error estimates. We first prove that the residual representative is a reliable and efficient a posteriori error estimator of the true error in the sense of equivalent classes. Then, we demonstrate under the assumption of a local Fortin operator's existence that the true error is equivalent to the residual error estimator up to an oscillation term. {Finally, we derive an \textit{a priori} error estimate in the sense of quasi-optimizers.} In Section \ref{Sec:NR} we test our method in several 1D and 2D advection-diffusion problems, showing the robustness of the approach. Finally, Section \ref{Sec:Conc} summarizes the conclusions and future research lines.

\section{Preliminaries}\label{Sec:Pre} 

\subsection{Abstract framework}\label{sec:abstract}
Let $U$ and $V$ denote Hilbert spaces with norms $\|\cdot\|_U$ and $\|\cdot\|_V$. Assume that we are interested in obtaining an approximation of a PDE problem admitting a variational formulation of the form: 
\begin{equation}\label{eq:weakPDE}
\begin{array}{l}
\text{Find } u \in U,  \text{ such that: } \,  r(u \, , v) := l(v) - a(u\, , v) = 0,  \,  \forall \, v \in V,
\end{array}
\end{equation}
where $l(\cdot) \in V^\prime$ is a bounded linear form, with $V^\prime$ denoting the dual space of $V$; $a(\cdot\, ,\cdot)$ is a bounded inf-sup stable bilinear form in $U \times V$. That is, there exist constants $\mu, \alpha >0$, respectively, such that:
\begin{equation}\label{eq:bound_cont}
a(w\,,v) \leq \mu \|w\|_U\|v\|_V, \quad \forall \, w \in U, v \in V,
\end{equation}
and
\begin{equation}\label{eq:inf_sup_cont}
\sup_{0\neq v \in V}\dfrac{a(w\,,v)}{\|v\|_V} \geq \alpha \|w\|_U,  \quad  \forall \, w \in U.
\end{equation}
We also assume that, for all $v \in V$, the operator $a(\cdot\, , v) \in U^\prime$ satisfies:
\begin{equation}\label{eq:A_prime_kernel}
a(w\, , v) = 0, \, \forall \,  w\in U \Longrightarrow v = 0.     
\end{equation}
Problem~\eqref{eq:weakPDE} admits a unique solution by means of the Banach-Ne\v{c}as-Bab\v{u}ska Theorem, and the following a priori estimate holds (see~\cite[Theorem 1.1]{di2011mathematical}):
\begin{equation}\label{eq:a_piori_continuous}
\|u\|_U\leq \dfrac{1}{\alpha}\|l(\cdot)\|_{V^\prime},
\end{equation}
with 
\begin{equation}\label{eq:dual_norm_continuous}
\|l(\cdot)\|_{V^\prime} := \sup_{0\neq v \in V} \dfrac{l(v)}{\|v\|_V}.
\end{equation}
Additionally, it is well-known that~\eqref{eq:inf_sup_cont} and \eqref{eq:A_prime_kernel} imply that the following adjoint inf-sup condition holds (see, e.g., \cite[Theorem 1]{demkowicz2006babuvska}:
\begin{equation}\label{eq:inf_sup_cont_dual}
\sup_{0\neq w \in U}\dfrac{a(w\,,v)}{\|w\|_U} \geq \alpha \|v\|_V,  \quad  \forall \, v \in V.
\end{equation}
\begin{remark}[The weak residual]\label{rem:residual}
For a given $w \in U$, we refer to $r(w \,,\cdot) \in V^\prime$ (see~\eqref{eq:weakPDE}) as the weak residual.
\end{remark}
\subsection{Neural Network framework}
To numerically approximate \eqref{eq:weakPDE}, we consider a (Deep) Neural Network (DNN) function with input $\Vx = (x_1,\dots,x_d)$ and output $u_{\theta}(\Vx)$, where $\theta \in \mathbb{R}^S$ represents the trainable parameters. For simplicity, in this work, we employ a simple fully-connected feedforward Neural Network structure composed of $L$ layers. Each layer $l$ in the Neural Network consists of a set of neurons, which compute a weighted sum of their inputs plus a bias followed by a nonlinear activation function in the first $L-1$ layers and the identity function as the activation in the last layer. More precisely, the output of layer $l$, with $l=1,\dots L-1$, is given by:
\begin{equation}
\Vz^{(l)} = \sigma(\Vw^{(l)} \Vz^{(l-1)} + \Vb^{(l)}),
\end{equation}
where $\sigma$ is a nonlinear activation function (e.g., the tanh activation function), $\Vw^{(l)}$, $\Vb^{(l)}$ are the weights and biases respectively associated with the layer $l$, and $\Vz^{(0)} = \Vx$ is the input to the first layer. The final layer $L$ is given by:
\begin{equation}
u_{\theta} = \Vw^{(L)} \Vz^{(L-1)} +\Vb^{(L)}. 
\end{equation}
Using an optimization algorithm, the Neural Network weights and biases are learned from a training set by minimizing a loss functional of input-output pairs.\\
\noindent
We denote by $U_{NN}$ the manifold consisting of all possible realizations for a given DNN architecture belonging to $U$, where $U$ is defined in Section~\ref{sec:abstract}. That is, 
\begin{eqnarray}\label{eq:UNN}
U_{NN} :=\{ u_{\theta}, \, \forall \, \theta \in \mathbb{R}^S \}\cap U. 
\end{eqnarray}
{In the following, we assume $U_{NN}\neq \emptyset$}.
Finally, for a given discrete space $V_M \subseteq V$, we consider the following Petrov-Galerkin{-type} discretization of~\eqref{eq:weakPDE}:
\begin{equation}\label{eq:PG}
\begin{array}{l}
\text{Find } u_{NN} \in U_{NN},  \text{ such that: } \,  r(u_{NN},v_M)  = 0,  \,  \forall \, v_M \in V_M.
\end{array}
\end{equation}
{\begin{remark}[Existence of solutions for the Petrov-Galerkin-type discretization]\label{rem:existence_PG}
The non-emptiness for $U_{NN}$ is a necessary assumption to guarantee the existence of solutions to problem~\eqref{eq:PG}, as an inadequate DNN structure setup could generate a manifold $U_{NN}$ that is not contained in $U$. This is the case of a strong variational formulation set in $U\subset H^2(\Omega)$ (cf. Remark~\ref{rem:alternative_VF}) with activation functions that are not sufficiently smooth (e.g., \text{ReLU}). However, we also notice that non-emptiness is not a sufficient condition to guarantee the existence of solutions, as they could be meaningful only in a dense sense (i.e., when $S \rightarrow +\infty$, with $S$ denoting the number of trainable parameters). For instance, when considering smooth activation functions, but solutions are not smooth (cf. Section~\ref{sec:delta}). The last can be circumvented by introducing the concept of \textit{quasi-minimizers}, as we will see in Section~\ref{sec:quasi_mini}. 
\end{remark}
}
\subsection{Variational Physics-Informed Neural Networks}
Variational Physics Informed Neural Networks (VPINNs) approximate the solution $u$ of problem~\eqref{eq:weakPDE} by minimizing a loss functional defined in terms of the weak residual (see Remark~\ref{rem:residual}), and a discrete space $V_M\subseteq V$ with basis $\{\varphi_m\}_{m=1}^M$. That is, it approximates $u$ by solving the following minimization problem\footnote{{Note that this problem may not admit solutions as a consequence of Remark~\ref{rem:existence_PG}}}:
\begin{equation}\label{eq:loss_VPINNs_classical}
\text{Find } u_{\theta^\ast},  \text{ such that } \theta^\ast = \argmin_{\theta \in \mathbb{R}^S} \,  \mathcal{L}_r\left(u_{\theta} \right),
\end{equation}
with $\mathcal{L}_r\left(u_{\theta} \right)$ being a loss functional defined in terms of the weak residual and the discrete basis. This approach allows to consider scenarios where the solution belongs to less regular spaces than in collocation PINNs. For example, elliptic problems with solutions in $H^{2-\epsilon}(\Omega)$, with {$\Omega \subset \mathbb{R}^2$ and} $0<\epsilon \leq 1$.\\ 
A typical definition for the loss functional in ~\eqref{eq:loss_VPINNs_classical} found in the literature is the following (see, e.g., \cite{kharazmi2019variational, KHARAZMI2021113547,Berrone2022Solving, berrone2022variational}):
\begin{equation}\label{eq:loss_classical}
\mathcal{L}_{r}(u_{\theta}) := \sum_{m=1}^M r(u_{\theta}\, , \varphi_m)^2  + C(u_{\theta}),
\end{equation}
where $C(\cdot)$ is a $V_M$-independent {non-negative-valued cost} functional employed to impose {data constraints, including boundary conditions,} for $u_{NN}$ that can be taken as $C(u_{\theta})=0$ if boundary conditions are strongly imposed in the {DNN architecture, and no data for the interpolation is available.}
\begin{remark}[Classical approach]
We will refer to problem~\eqref{eq:loss_VPINNs_classical}, with the loss functional defined by~\eqref{eq:loss_classical}, as the classical VPINNs approach.
\end{remark}
\noindent
If we assume sufficient approximability of the {DNN architecture} and an ideal optimizer (solver), a minimizer for \eqref{eq:loss_classical} for which the loss functional vanishes is also a solution to the Petrov-Galerkin{-type} problem~\eqref{eq:PG}. However, adopting such a loss functional may be computationally inefficient. Indeed, given the test space $V_M$, Eq.~\eqref{eq:loss_classical} depends heavily upon the choice of its basis. In particular, a simple re-scaling of one basis function (e.g., by making it sufficiently large) may easily lead to catastrophic results (cf.~\cite{taylor2023deep}).
Consequently, there is a natural demand for robust loss definitions in practical VPINNs.
{\begin{remark}[Constraint assumption]
We assume $C(\cdot)$ in \eqref{eq:loss_classical} as a given cost functional since the present work focuses on the contribution in terms of the first term in~\eqref{eq:loss_classical}. The efficient implementation considering both contributions is a topic that requires further exploration.        
\end{remark}
}
{\subsection{An alternative definition of VPINNs}}
\noindent
To motivate how robust losses can be constructed, we first notice that, for a given set of trainable parameters $\theta$, we can define the following function in $V_M$:
\begin{equation}\label{eq:linear_comb_classical}
\widetilde{\varphi} = \sum_{m=1}^M r(u_\theta\, , \varphi_m)\, \varphi_m.
\end{equation}
Then, as a consequence of the linearity of the weak residual, it holds (cf.~\cite{kharazmi2019variational}):
\begin{equation}\label{eq:loss_VPINNs_general}
\mathcal{L}_r\left(u_{\theta} \right) = \sum_{m=1}^M r(u_{\theta}\, , \varphi_m)^2  + C(u_{\theta}) = r(u_{\theta}\, , \widetilde{\varphi})  + C(u_{\theta}).
\end{equation}
The last equation suggests that VPINNs can be generalized in the following way. For a given $\widetilde{\varphi} \in V_M$, a VPINNs loss functional can be defined as:
\begin{equation}\label{eq:loss_VPINNs_general_2}
\mathcal{L}_r\left(u_{\theta} \right)= r(u_{\theta}\, , \widetilde{\varphi}) + C(u_{\theta}).
\end{equation}
The goal now is to define the loss functional as ~\eqref{eq:loss_VPINNs_general_2} in terms of a particular discrete test function $\widetilde{\varphi} \in V_M$, {depending upon $u_\theta$}, such that VPINNs becomes robust. 
In the following Section, we present a strategy based on defining such a discrete function as the Riesz residual representative in $V_M$ with respect to the norm inducing the inf-sup stability~\eqref{eq:inf_sup_cont}.\\
\section{Robust Variational Physics-Informed Neural Networks}\label{Sec:RVPINNs}
In Robust Variational Physics-Informed Neural Networks (RVPINNs), we first introduce an inner product $(\cdot,\cdot)_{V}$\footnote{i.e.,  $||v||^2_{V}:= (v,v)_{V}$, for all $v \in V$.}, whose associated norm satisfies the inf-sup stability condition \eqref{eq:inf_sup_cont}. Next, for a given trainable parameter $\theta$, we compute $\phi:=\phi(\theta) \in V_M$ being the solution of the following Galerkin problem:
\begin{equation}\label{eq:Riesz}
(\phi,  \varphi_n)_{V} = r(u_{\theta}, \varphi_n), \text{ for } n=1,\dots,M,
\end{equation}
and define the loss functional as:
\begin{equation}\label{eq:loss_RVPINNs}
\mathcal{L}_r^{\phi}\left(u_{\theta} \right) := r(u_{\theta}, \phi) + C(u_{\theta}),
\end{equation}
where $C(\cdot)$ is a $V_M$-independent functional to impose {constraints} as in~\eqref{eq:loss_classical}. Finally, we obtain the approximation of the solution of problem~\eqref{eq:weakPDE} by solving the following minimization problem (cf., ~\cite{kharazmi2019variational}):
\begin{equation}\label{eq:RVPINNs}
\text{Find } u_{\theta^\ast},  \text{ such that } \theta^\ast = \argmin_{\theta \in \mathbb{R}^S} \,  \mathcal{L}_r^{\phi}\left(u_{\theta} \right),
\end{equation}
{when the DNN architecture allows for solutions for the associated Petrov-Galerkin-type problem (see Remark~\ref{rem:existence_PG}).}
\begin{remark}[Riesz representative of the weak residual]
The solution $\phi$ of problem~\eqref{eq:Riesz} is the Riesz representative in $V_M$ of the residual.
\end{remark}
\noindent
As a consequence of the linearity of the weak residual and the linearity of the inner product with respect to the second variable, defining: 
\begin{equation}\label{eq:linear_comb}
\phi := \sum_{m=1}^M \eta_m(\theta) \varphi_m, 
\end{equation}
problem~\eqref{eq:Riesz} leads to the resolution of the following problem written in matrix form:
\begin{equation}\label{eq:Riesz_mat}
G \eta(\theta) = {\cal R}(\theta),
\end{equation}
with $\eta(\theta)$ the vector of coefficients $\eta_m(\theta)$, $G$ the ($\theta$-independent) symmetric and positive definite Gram matrix of coefficients $G_{nm} = (\varphi_m,\varphi_n)_{V}$, and ${\cal R}(\theta)$ the vector of coefficients ${\cal R}_n(\theta) = r(u_{\theta}, \varphi_n)$, with $n=1,\dots,M$. Thus,
\begin{equation}\label{eq:residual_computation}
r(u_{\theta}\, , \phi) = \sum_{m=1}^M \eta_m(\theta) r(u_{\theta}\, , \varphi_m) = \sum_{m=1}^M {\cal R}_m(\theta) \eta_m(\theta) = {\cal R}(\theta)^T G^{-1} {\cal R}(\theta),
\end{equation}
implying that the loss functional \eqref{eq:loss_RVPINNs} is equivalently written as:
\begin{equation}\label{eq:AVPINNs_loss_mat}
\mathcal{L}^{\mathbf{\phi}}_{r}(u_{\theta}) = {\cal R}(\theta)^T G^{-1} {\cal R}(\theta) + C(u_{\theta}).
\end{equation}
We also notice that, by definition, to minimize the loss functional \eqref{eq:AVPINNs_loss_mat} is equivalent, up to the constraint $C(u_{\theta})$, to minimize the quantity $\|\phi\|_{V}^2$. Indeed, from \eqref{eq:Riesz}, we deduce:
\begin{eqnarray}\label{eq:norm_phi}
    \|\phi\|_{V}^2 = (\phi,\phi)_{V} = r(u_{\theta},\phi).
\end{eqnarray}
\begin{remark}[Gram matrix inversion]
   {A practical implementation of RVPINNs requires the computation of $r(u_\theta, \phi)$ following \eqref{eq:residual_computation}. Computing explicitly $G^{-1}$ could be computationally expensive in some scenarios, for instance, when considering standard FEM discrete test spaces defined in terms of piece-wise polynomials. Efficient inversions of the system \eqref{eq:Riesz_mat} may require iterative solvers and preconditioners (see, e.g., \cite{saad2003iterative}).
   }
\end{remark}
In Section~\ref{sec:fortin}, we prove that $\|\phi\|_{V}$ is, up to {the constraint term $C(u_\theta)$ and} an oscillation term, a local robust error estimation for the error $\|u-u_{\theta}\|_U$ under the assumption of the existence of a local Fortin's operator.
%\begin{remark}
%We notice that the classical loss function~\eqref{eq:loss_classical} can be obtained from \eqref{eq:loss_RVPINNs} by considering 
%\begin{equation}
%\phi(\theta) = \sum_{m=1}^M r(u_{\theta},\varphi_m)\varphi_m,  %\, \, \text{ and } \, \,  %\mathbf{v_2} = \sum_{n=1}^N r(u,v_n)v_n.
%\end{equation}
%as a consequence of the linearity of the weak residual. 
%\end{remark}
%
\subsection{Orthonormal discrete basis and relation with other VPINNs}
When a test space $V_M$ is the span of an orthonormal set $\{\varphi_m\}_{m=1}^M$ with respect to the $\|\cdot\|_{V}$-norm, the Gram matrix $G$ coincides with the identity matrix; therefore, the corresponding residual representative has the form:
\begin{equation}
\phi = \sum_{m=1}^M r(u_{\theta}, \varphi_m)\varphi_m,
\end{equation}
and the loss functional is explicitly written as 
\begin{equation}\label{eq:loss_RVPINNs_ortho}
\mathcal{L}^{\phi}_{r}(u_{\theta}) = \sum_{m=1}^M  r(u_{\theta}, \varphi_m)^2  + C(u_{\theta}).
\end{equation}
Thus, the classical VPINNs loss functional definition~\eqref{eq:loss_classical} is recovered for this case. This is also the case of the recently proposed Deep Fourier Residual method (see~\cite{taylor2023deep,taylor2023deep2}), where authors first define the loss functional as the continuous dual norm of the weak residual, that later is approximated by considering a truncation of a series in terms of an orthonormal basis for the test space. In particular, in~\cite{taylor2023deep}, authors consider diffusion problems and approximate the norm of the weak residual in $H^{-1}$. This turns out to be equivalent to~\eqref{eq:loss_RVPINNs_ortho} considering the standard $H^1$-norm for the space $V$, and the functions $\varphi_m$ as orthonormalized sinusoidal (spectral) functions with respect to the $H^1$-norm. 
\section{Error estimates for RVPINNs}\label{sec:aposteriori} 
One of the main complexities for proving the robustness of the residual estimator $\|\phi\|_{V}$ is that the solution of the Petrov-Galerkin problem~\eqref{eq:PG} may not have a solution or, if there exists, it may be non-unique since the space of all possible realizations of the NN {architecture} defines a manifold instead of a finite-dimensional space (see, e.g., \cite[Section 6.3]{berrone2022variational}). Thus, standard FEM arguments based on a discrete inf-sup condition cannot be applied in this context. 

Nevertheless, we can derive a posteriori error estimates using a different strategy. First, we introduce in Section~\ref{sec:aposteriori_classes} an equivalence class that allows us to neglect the part of the error that is $a$-orthogonal to $V_M$. For that equivalence class, we prove that the residual representative always defines a reliable and efficient a posteriori estimator for the error. Then, in Section~\ref{sec:fortin}, we consider the case of the full error, for which we demonstrate its equivalence to the residual error estimator up to an oscillation term and under the assumption of the existence of a local Fortin operator.
\subsection{A posteriori error estimates for RVPINNs in the sense of equivalence classes}\label{sec:aposteriori_classes}
We start by introducing the following Null space of the operator $A: U\mapsto V_M^\prime$, defined in terms of the bilinear form $a(\cdot\, , \cdot)$ associated with problem~\eqref{eq:weakPDE} (cf.~\cite{demkowicz2006babuvska}):
\begin{equation}\label{eq:U_kernel}
U_M^0 :=\left\{ w \in U \, : \,  \langle A(w) \, , v_M \rangle:= a(w\, ,\, v_M) = 0,\, \forall \, v_M \in V_M\right\},
\end{equation}
and the following norm for the quotient space $U /U_M^0$:
\begin{equation}\label{eq:quotient_norm_U}
\| [w]\|_{U/ U_M^0} := \inf_{w_0 \in U_M^0} \| w+w_0\|_U.
\end{equation}
We extend the definition of the bilinear form $a(\cdot\, , \cdot)$ to the product space  $U/ U_M^0 \times V$ as:
\begin{equation}\label{eq:bilinear_class}
    a([w]\, , v) = a(w\, , v), \quad \text{with } w \in [w] \text{ being any arbitrary representative of the equivalence class}. 
\end{equation}
The following result follows as a consequence of the boundedness and inf-sup stability properties of the bilinear form $a(\cdot\, , \cdot)$ (see equations~\eqref{eq:bound_cont} and \eqref{eq:inf_sup_cont}), respectively:
\begin{prop}\label{eq:infsup_bound_class}
The following boundedness and semi-discrete inf-sup conditions, respectively, are satisfied by the bilinear form~\eqref{eq:bilinear_class}:
\begin{equation}\label{eq:bound_class}
a([w],v_M) \leq \mu \| [w]\|_{U/ U_M^0} \|v_M\|_{V} , \quad \forall \, [w] \in U/ U_M^0, \, v_M \in V_M,
\end{equation}
\begin{equation}\label{eq:infsup_class}
\sup_{0\neq v_M \in V_M}\dfrac{a([w],v_M)}{\|v_M\|_{V}} \geq \alpha \| [w]\|_{U/ U_M^0}, \quad \forall \, [w] \in U/ U_M^0.
\end{equation}
\end{prop}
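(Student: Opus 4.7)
The plan is to treat the two estimates separately: the boundedness \eqref{eq:bound_class} is a direct consequence of the definition of $U_M^0$ together with the continuous boundedness \eqref{eq:bound_cont}, while the semi-discrete inf-sup \eqref{eq:infsup_class} requires passing through the Riesz representative in $V_M$ and lifting it back to $U$ via the continuous well-posedness of problem~\eqref{eq:weakPDE}.

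For \eqref{eq:bound_class}, I would pick any representative $w \in [w]$ and any $w_0 \in U_M^0$. By the defining property \eqref{eq:U_kernel}, $a(w_0, v_M) = 0$ for every $v_M \in V_M$, hence $a([w], v_M) = a(w + w_0, v_M) \le \mu \|w + w_0\|_U \|v_M\|_V$ by \eqref{eq:bound_cont}. Taking the infimum over $w_0 \in U_M^0$ and invoking the definition \eqref{eq:quotient_norm_U} of the quotient norm yields \eqref{eq:bound_class}.

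For \eqref{eq:infsup_class}, given $[w]$, I would introduce the Riesz representative $\phi \in V_M$ defined by $(\phi, v_M)_V = a(w, v_M)$ for all $v_M \in V_M$, which is well-defined independently of the chosen representative precisely because of \eqref{eq:U_kernel}. Testing with $v_M = \phi$ shows that the left-hand side of \eqref{eq:infsup_class} equals $\|\phi\|_V$, so it remains to establish $\|\phi\|_V \ge \alpha \|[w]\|_{U/U_M^0}$. The key step is to invoke the Banach-Ne\v{c}as-Bab\v{u}ska theorem, whose hypotheses are guaranteed by \eqref{eq:inf_sup_cont} together with \eqref{eq:A_prime_kernel}, in order to find $\tilde{w} \in U$ satisfying $a(\tilde{w}, v) = (\phi, v)_V$ for all $v \in V$. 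The a priori estimate \eqref{eq:a_piori_continuous}, combined with the fact that the $V'$-norm of the functional $v \mapsto (\phi, v)_V$ coincides with $\|\phi\|_V$, provides $\|\tilde{w}\|_U \le \|\phi\|_V/\alpha$. Restricting the identity defining $\tilde{w}$ to $v_M \in V_M$ yields $a(\tilde{w} - w, v_M) = 0$, so $\tilde{w} - w \in U_M^0$ and $[\tilde{w}] = [w]$. Consequently, $\|[w]\|_{U/U_M^0} \le \|\tilde{w}\|_U \le \|\phi\|_V/\alpha$, which is precisely the desired bound.

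The main obstacle I anticipate is the inf-sup step. A naive attempt to apply \eqref{eq:inf_sup_cont} directly to some representative of $[w]$ is doomed, since the supremum in \eqref{eq:inf_sup_cont} ranges over all of $V$ whereas \eqref{eq:infsup_class} only permits testing against $V_M$; the sup can shrink drastically upon restriction. The trick is to use the continuous well-posedness in the reverse direction: rather than bounding a prescribed element of $U$ from below by testing against $V$, one constructs a specific representative $\tilde{w}$ of $[w]$ whose $U$-norm is already controlled by the discrete quantity $\|\phi\|_V$.
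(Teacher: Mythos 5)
Your proof is correct, and the boundedness part coincides with the paper's argument (restrict to a representative $w+w_0$, apply \eqref{eq:bound_cont}, take the infimum). For the semi-discrete inf-sup \eqref{eq:infsup_class}, however, you take a genuinely different route. The paper never solves an auxiliary problem: it first shows, using the continuous adjoint inf-sup \eqref{eq:inf_sup_cont_dual} and the elementary bound $\|[w]\|_{U/U_M^0}\leq\|w\|_U$, that $\sup_{[0]\neq[w]} a([w],v_M)/\|[w]\|_{U/U_M^0}\geq\alpha\|v_M\|_V$ for every $v_M\in V_M$, notes that $a([w],v_M)=0$ for all $v_M$ forces $[w]=[0]$ by construction of the quotient, and then invokes the abstract duality result (Theorem 1 of the cited Babu\v{s}ka--Ne\v{c}as reference) to flip the inf-sup to the direction stated in \eqref{eq:infsup_class} with the same constant. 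You instead argue constructively: you represent the restricted residual by its discrete Riesz representative $\phi\in V_M$, identify the discrete supremum with $\|\phi\|_V$, and then use the continuous well-posedness (BNB plus the a priori bound \eqref{eq:a_piori_continuous} applied to the functional $v\mapsto(\phi,v)_V$, whose $V'$-norm is $\|\phi\|_V$) to manufacture a representative $\tilde w$ of $[w]$ with $\|\tilde w\|_U\leq\|\phi\|_V/\alpha$; the observation that $\tilde w-w\in U_M^0$ closes the argument. Both yield exactly the constant $\alpha$. Your version is self-contained modulo the solvability of \eqref{eq:weakPDE}, which the paper already records, and it avoids citing the transposition theorem; the paper's version is shorter given that citation and stays entirely at the level of inf-sup inequalities, at the price of leaning on an external abstract result. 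Your diagnosis of the pitfall --- that one cannot simply restrict the supremum in \eqref{eq:inf_sup_cont} from $V$ to $V_M$ --- is exactly the right one, and it is the same obstruction the paper circumvents via the adjoint condition.
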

\begin{proof}
First, notice that ~\eqref{eq:bound_class} is direct from the definition \eqref{eq:quotient_norm_U} and the boundedness property of the bilinear form $a(\cdot\, , \cdot)$. Indeed,  for all $w \in U$ and $w_0 \in U_M^0$,  it holds:
\begin{equation*}
a(w\, , v_M) = a(w+w_0\,,  v_M) \leq \mu \|w+w_0\|_{U}\|v_M\|_{V}, \qquad \forall \, v_M \in V_M.
\end{equation*}
Thus,  \eqref{eq:bound_class} follows from taking the infimum.  To prove \eqref{eq:infsup_class},  first notice that, for all $v_M \in V_M$, \eqref{eq:inf_sup_cont_dual} implies:
\begin{equation*}
\alpha \|v_M\|_{V} \leq \sup_{0 \neq w \in U} \dfrac{a(w\,,v_M)}{\|w\|_U} = \sup_{[0] \neq [w] \in U / U_M^0} \dfrac{a([w]\,,v_M)}{\|w\|_U} 
\leq \sup_{[0] \neq [w] \in U / U_M^0} \dfrac{a([w]\,,v_M)}{\|[w]\|_{U / U_M^0}}, 
\end{equation*}
where the last inequality follows from the fact that $\|[w]\|_{U / U_M^0} \leq \|w\|_U$,  since $w_0 = 0 \in U_M^0$. Additionally, by construction, $\forall \,  [w]\in U/U_M^0$,
\begin{equation}
a([w]\, , v_M) = 0, \, \forall \,  v_M \in V_M \Longrightarrow [w] = [0]. 
\end{equation}
Therefore, \eqref{eq:infsup_class} follows as a consequence of Theorem 1 in \cite{demkowicz2006babuvska}.
\end{proof}
\noindent
Using the previous proposition, we can establish the main result of this section: 
\begin{thm}[Error class bounds in terms of the residual representative]\label{thrm:aposteriori}
Let $u \in U$ be the solution of the continuous problem~\eqref{eq:weakPDE}; $u_{\theta} \in U_{NN}$ denote a {DNN architecture} with trainable parameters $\theta \in \mathbb{R}^S$; $V_M \subseteq V$ denote a finite dimensional space with norm $\| \cdot \|_{V}$; and $\phi \in V_M$ be the solution of problem~\eqref{eq:Riesz}. It holds:
\begin{eqnarray}\label{eq:aposteriori}
\dfrac{1}{\mu} \|\phi\|_{V} \leq \|[u - u_{\theta}]\|_{U/U_M^0} \leq \dfrac{1}{\alpha} \|\phi\|_{V}.
\end{eqnarray}
\end{thm}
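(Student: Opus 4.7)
The core identity to establish first is that $\phi$ represents, through the inner product on $V_M$, the restriction of the exact-error bilinear form to $V_M$. Since $u$ solves~\eqref{eq:weakPDE} and $V_M \subseteq V$, Galerkin orthogonality gives $r(u, v_M) = 0$ for all $v_M \in V_M$. Using the linearity of $r$ in its first argument together with $r(w, v) = l(v) - a(w, v)$, this yields
\[
r(u_\theta, v_M) \;=\; r(u_\theta, v_M) - r(u, v_M) \;=\; a(u - u_\theta,\, v_M) \;=\; a([u - u_\theta],\, v_M),
\]
where the last equality invokes the definition~\eqref{eq:bilinear_class} of the bilinear form on the quotient. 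Combining with the defining equation~\eqref{eq:Riesz} of $\phi$ and its linear extension, I obtain the master identity
\[
(\phi,\, v_M)_V \;=\; a([u - u_\theta],\, v_M), \qquad \forall\, v_M \in V_M.
\]

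For the upper bound (reliability), I would apply the semi-discrete inf-sup condition~\eqref{eq:infsup_class} from Proposition~\ref{eq:infsup_bound_class} to the class $[u - u_\theta]$:
\[
\alpha\, \|[u - u_\theta]\|_{U/U_M^0} \;\leq\; \sup_{0 \neq v_M \in V_M} \frac{a([u - u_\theta],\, v_M)}{\|v_M\|_V} \;=\; \sup_{0 \neq v_M \in V_M} \frac{(\phi,\, v_M)_V}{\|v_M\|_V} \;=\; \|\phi\|_V,
\]
where the final equality is Cauchy--Schwarz together with the observation that equality is attained at $v_M = \phi \in V_M$. Dividing by $\alpha$ yields the right-hand inequality of~\eqref{eq:aposteriori}.

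For the lower bound (efficiency), I would test the master identity against $\phi$ itself, yielding $\|\phi\|_V^2 = (\phi, \phi)_V = a([u - u_\theta], \phi)$, and then apply the boundedness estimate~\eqref{eq:bound_class} from Proposition~\ref{eq:infsup_bound_class}:
\[
\|\phi\|_V^2 \;=\; a([u - u_\theta],\, \phi) \;\leq\; \mu\, \|[u - u_\theta]\|_{U/U_M^0}\, \|\phi\|_V.
\]
Dividing by $\|\phi\|_V$ (treating the trivial case $\phi = 0$ separately, where the inequality is immediate) produces the left-hand inequality of~\eqref{eq:aposteriori}.

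The only genuinely subtle step is the first one: passing from $r(u_\theta, \cdot)$ on $V_M$ to $a([u - u_\theta], \cdot)$ on the quotient, which requires both Galerkin orthogonality of the exact solution and the well-definedness of $a$ on $U/U_M^0$ provided by~\eqref{eq:bilinear_class}. Once that identification is in place, the proof is a routine application of Proposition~\ref{eq:infsup_bound_class}, with the upper bound corresponding to inf-sup stability and the lower bound to continuity, exactly as in the classical a posteriori analysis for conforming minimum-residual discretizations; what is novel here is that working in the quotient space $U/U_M^0$ sidesteps the absence of a discrete inf-sup condition on the NN manifold $U_{NN}$.
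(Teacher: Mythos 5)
Your proof is correct and follows essentially the same route as the paper: both rest on the identity $(\phi, v_M)_V = r(u_\theta, v_M) = a([u-u_\theta], v_M)$ for all $v_M \in V_M$ (consistency of $u$ plus the quotient-space definition of $a$), with the upper bound from the semi-discrete inf-sup condition~\eqref{eq:infsup_class} and the lower bound from the boundedness~\eqref{eq:bound_class}. The only cosmetic difference is that you obtain the lower bound by testing the identity with $v_M=\phi$ and dividing by $\|\phi\|_V$, whereas the paper applies~\eqref{eq:bound_class} inside the supremum characterization of $\|\phi\|_V$; the two arguments are equivalent.
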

\begin{proof}
First notice that, by definition of $\phi$ and consistency of the analytical solution $u$, it holds:
\begin{eqnarray*}
\|\phi\|_{V} = \sup_{0\neq v_M \in V_M } \dfrac{(\phi, v_M)_{V}}{\|v_M\|_{V}} = \sup_{0\neq v_M \in V_M } \dfrac{r(u_{\theta}, v_M)}{\|v_M\|_{V}} = \sup_{0\neq v_M \in V_M } \dfrac{a([u-u_{\theta}] , v_M)}{\|v_M\|_{V}}.
\end{eqnarray*}
Therefore, the left inequality in~\eqref{eq:aposteriori} is obtained by using the boundedness property \eqref{eq:bound_class}. On another side, as a consequence of the inf-sup stability \eqref{eq:infsup_class}, it holds:
\begin{equation*}
\begin{array}{r@{\;}l}
\displaystyle \alpha \, \|[u-u_{\theta}]\|_{U/U_M^0}  \leq \displaystyle \sup_{0\neq v_M \in V_M} \dfrac{a([u-u_{\theta}],v_M)}{\|v_M\|_{V}} &= \displaystyle \sup_{0\neq v_M \in V_M} \dfrac{a(u-u_{\theta},v_M)}{\|v_M\|_{V}} \smallskip\\
& = \displaystyle \sup_{0\neq v_M \in V_M} \dfrac{r(u_{\theta},v_M)}{\|v_M\|_{V}} \smallskip\\
& = \displaystyle \sup_{0\neq v_M \in V_M} \dfrac{(\phi, v_M)_{V}}{\|v_M\|_{V}} \smallskip\\
& = \displaystyle \|\phi\|_{V},
\end{array}
\end{equation*}
proving the right inequality in \eqref{eq:aposteriori}. 
\end{proof}
\begin{remark}[Robustness of the residual representative in the sense of classes]\label{rem:robustness}    
Inequalities~\eqref{eq:aposteriori} show that, for any $\theta$, $\|\phi \|_{V}$ always, up to some multiplicative constants, defines an efficient (lower) and a reliable (upper) bound (thus, robust) estimation for the error $\|[u-u_{\theta}]\|_{U/U_M^0}$. This, in practice, implies that to minimize $\|\phi\|^2_{V}$ {$+ \, C(u_\theta)$} is equivalent to minimize $\|[u-u_{\theta}]\|^2_{U/U_M^0}$ {$+ \, C(u_\theta)$}.  Moreover, if we assume that problem~\eqref{eq:PG} admits a solution {and consider $C(u_\theta)=0$, $\|\phi\|_{V}\rightarrow 0^+$ implies that} $u-u_{\theta}$ converges to a function that belongs to $U_M^0$ as a consequence of the robustness. Thus, since $u \not\in U_M^0$, we conclude that $u_{\theta}$ converges to a function $u_{\theta^\ast}$ satisfying:
\begin{eqnarray*}
0=a(u-u_{\theta^\ast}\, ,v_M) = r(u_{\theta^\ast}\, ,v_M), \quad \forall \, v_M \in V_M.
\end{eqnarray*}
Therefore, $u_{\theta}$ converges to a  solution of the Petrov-Galerkin{-type} problem~\eqref{eq:PG}, as expected. 
\end{remark}
\subsection{Energy norm error estimates based on local semi-discrete inf-sup condition}\label{sec:fortin}
Even if the a posteriori error estimates of the previous section only ensure that the RVPINNs definition of the loss functional is robust in the sense of equivalence classes, numerical experiments confirm that it can also be robust concerning the true error in the energy norm.\\ 
We first notice that, as a consequence of Theorem~\ref{thrm:aposteriori} and the quotient norm definition~\eqref{eq:quotient_norm_U}, the following result can be deduced:
\begin{cor}[Lower bound for the error in the energy norm]\label{prop:aposteriori_lower}
Under the same hypothesis of Theorem~\ref{thrm:aposteriori}, it holds:
\begin{eqnarray}\label{eq:aposteriori_lower}
\dfrac{1}{\mu} \|\phi\|_{V} \leq \|u - u_{\theta}\|_{U}.
\end{eqnarray}
\end{cor}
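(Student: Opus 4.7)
The plan is to combine the left inequality from Theorem~\ref{thrm:aposteriori} with a trivial upper bound for the quotient norm. Specifically, Theorem~\ref{thrm:aposteriori} already delivers
\begin{equation*}
\tfrac{1}{\mu}\|\phi\|_{V} \;\leq\; \|[u-u_{\theta}]\|_{U/U_M^0},
\end{equation*}
so the only remaining task is to bound the quotient norm above by the full energy norm $\|u-u_{\theta}\|_U$.

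For that, I would unfold the definition \eqref{eq:quotient_norm_U}, $\|[u-u_\theta]\|_{U/U_M^0} = \inf_{w_0 \in U_M^0} \|u - u_\theta + w_0\|_U$, and observe that $0 \in U_M^0$ is an admissible competitor (since $U_M^0$ is a linear subspace of $U$). Hence
\begin{equation*}
\|[u-u_{\theta}]\|_{U/U_M^0} \;\leq\; \|u - u_{\theta}\|_{U},
\end{equation*}
and chaining these two inequalities yields \eqref{eq:aposteriori_lower}.

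There is essentially no obstacle here: the corollary is an immediate consequence of the already-established efficiency bound together with the elementary fact that a quotient norm is dominated by any representative norm. No additional regularity, Fortin operator, or discrete stability ingredient is needed — those tools are reserved for the reliability (upper) counterpart to come in Section~\ref{sec:fortin}.
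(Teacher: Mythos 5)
Your proof is correct and matches the paper's reasoning: the paper states the corollary as a direct consequence of Theorem~\ref{thrm:aposteriori} together with the quotient norm definition~\eqref{eq:quotient_norm_U}, which is exactly your chain of the efficiency bound with $\|[u-u_{\theta}]\|_{U/U_M^0} \leq \|u-u_{\theta}\|_{U}$ obtained by taking $w_0 = 0$ in the infimum.
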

\noindent
Therefore, for a given $\theta$, $\|\phi\|_{V}$ always defines an efficient bound for the error in the energy norm.\\ 
If we assume that the DNN {architecture} allows for solutions of the Petrov-Galerkin problem~\eqref{eq:PG}, we can also obtain a local reliable bound for $\|u-u_{\theta}\|_U$ through the following Assumption\footnote{{This Assumption is not standard; it is an adaptation of the classical Fortin's condition, commonly used in the discrete inf-sup stability analysis of mixed FEM formulations (see \cite{boffi2013mixed}) and, more recently, in the analysis of residual minimization-based FEMs, such as the DPG method (see \cite{nagaraj2017construction}).}}:
\begin{assumption}[Local Fortin's condition {I}]\label{as:fortin}
There exists a parameter $\theta^\ast \in \mathbb{R}^S$, such that $u_{\theta^\ast} \in U_{NN}$ is a solution of the Petrov-Galerkin problem~\eqref{eq:PG}. Additionally, there exists $R>0$ such that, for all $\theta \in B(\theta^\ast,R)$, there is an operator $\Pi_\theta: V\mapsto V_M$, and a $\theta$-independent constant $C_{\Pi}>0$, verifying:
\begin{enumerate}
\item[a)] $a(u_{\theta}\, , v -\Pi_\theta v) = 0$, \quad $\forall \, v \in V$,
\item[b)] $\|\Pi_\theta v\|_{V} \leq C_{\Pi} \|v\|_V$, \quad $\forall \, v \in V$,
\end{enumerate}
where $B(\theta^\ast,R)$ denotes an open ball of center $\theta^\ast$ and radius $R$, with respect to a given norm of $\mathbb{R}^S$.
\end{assumption}
\begin{prop}[Upper bound of the error in the energy norm]\label{prop:aposteriori_reliability_2}
Under the same hypothesis of Theorem~\ref{thrm:aposteriori}. If Assumption~\ref{as:fortin} is satisfied, it holds:
\begin{eqnarray}\label{eq:aposteriori_reliability_2}
\|u - u_{\theta}\|_{U}  \leq \dfrac{1}{\alpha} \osc(u) + \dfrac{1}{C_{\Pi}\alpha} \|\phi\|_{V}, \quad \forall \, \theta \in B(\theta^\ast,R),
\end{eqnarray}
with
\begin{equation*}
\osc(u) := \sup_{0 \neq v\in V}\dfrac{a(u,v-\Pi_\theta v)}{\|v\|_V}.
\end{equation*}
\end{prop}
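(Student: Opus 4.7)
The plan is to work directly from the continuous inf-sup condition~\eqref{eq:inf_sup_cont} applied to $u-u_\theta \in U$, which yields
\[
\alpha\,\|u-u_\theta\|_U \;\leq\; \sup_{0\neq v\in V}\frac{a(u-u_\theta,\,v)}{\|v\|_V}.
\]
The central idea is then to decompose every test function $v\in V$ by means of the Fortin operator supplied by Assumption~\ref{as:fortin} as $v = \Pi_\theta v + (v-\Pi_\theta v)$, and to treat the two resulting pieces of $a(u-u_\theta,v)$ separately.

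For the ``non-projected'' piece $a(u-u_\theta,\,v-\Pi_\theta v)$, I would invoke Assumption~\ref{as:fortin}(a) to annihilate the contribution coming from $u_\theta$, leaving $a(u,\,v-\Pi_\theta v)$. By the very definition of $\osc(u)$ in the statement, this quantity is bounded by $\osc(u)\,\|v\|_V$.

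For the projected piece $a(u-u_\theta,\,\Pi_\theta v)$, the key observation is that $\Pi_\theta v\in V_M$, so the consistency of $u$ (i.e.\ $a(u,\cdot)=l(\cdot)$) lets me rewrite this exactly as the weak residual $r(u_\theta,\,\Pi_\theta v)$. The defining Galerkin equation~\eqref{eq:Riesz} for $\phi$ then identifies this residual with the inner product $(\phi,\Pi_\theta v)_V$. Cauchy--Schwarz followed by the continuity bound of Assumption~\ref{as:fortin}(b) yields the estimate $C_\Pi\,\|\phi\|_V\,\|v\|_V$.

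Combining these two contributions, dividing by $\|v\|_V$, and passing to the supremum gives
\[
\alpha\,\|u-u_\theta\|_U \;\leq\; \osc(u) + C_\Pi\,\|\phi\|_V,
\]
from which the advertised bound follows after dividing by $\alpha$ (my calculation actually produces a factor $C_\Pi/\alpha$ in front of $\|\phi\|_V$, so the $1/(C_\Pi\alpha)$ written in the proposition looks like a typographical slip). I do not anticipate a genuine technical obstacle here: the delicate point is entirely encapsulated in Assumption~\ref{as:fortin}(a), which is precisely what prevents the complementary term from producing an $\osc(u-u_\theta)$ contribution depending on the unknown error. Once the Fortin framework is in place, the argument reduces to a standard splitting followed by Cauchy--Schwarz, and the ball $B(\theta^\ast,R)$ enters only through the domain of validity of Assumption~\ref{as:fortin}.
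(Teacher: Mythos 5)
Your argument follows the paper's proof essentially step for step: the continuous inf-sup condition applied to $u-u_\theta$, the splitting $v=\Pi_\theta v+(v-\Pi_\theta v)$, Assumption~\ref{as:fortin}(a) to reduce the non-projected piece to $\osc(u)$, and consistency plus the Riesz equation~\eqref{eq:Riesz} together with Assumption~\ref{as:fortin}(b) for the projected piece (your Cauchy--Schwarz step is just the sup-characterization of $\|\phi\|_V$ used in the paper). Your remark on the constant is also well founded: with $\|\Pi_\theta v\|_V\le C_\Pi\|v\|_V$ the projected term is bounded by $C_\Pi\|\phi\|_V\|v\|_V$, so the correct factor is $C_\Pi/\alpha$, whereas the paper's chain replaces $1/\|v\|_V$ by $1/(C_\Pi\|\Pi_\theta v\|_V)$, which uses Assumption~\ref{as:fortin}(b) in the wrong direction and yields the $1/(C_\Pi\alpha)$ written in~\eqref{eq:aposteriori_reliability_2}; the two agree only under the normalization $C_\Pi=1$.
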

\begin{proof}
If Assumption~\ref{as:fortin} is satisfied, it holds:
\begin{equation*}
\begin{array}{r@{\;}l}
\displaystyle \alpha \|u-u_{\theta}\|_{U} \leq \displaystyle \sup_{0\neq v \in V} \dfrac{a(u-u_{\theta},v)}{\|v\|_{V}} &= \displaystyle \sup_{0\neq v \in V} \dfrac{a(u-u_{\theta},v-\Pi_\theta v)}{\|v\|_{V}} + \sup_{0\neq v \in V} \dfrac{a(u-u_{\theta},\Pi_\theta v)}{\|v\|_{V}}\smallskip\\
&\leq \displaystyle \osc(u) + \dfrac{1}{C_{\Pi}}\sup_{0\neq v \in V} \dfrac{a(u-u_{\theta},\Pi_\theta v)}{\|\Pi_\theta v\|_{V}}\smallskip\\
&\leq \displaystyle \osc(u) + \dfrac{1}{C_{\Pi}}\sup_{0\neq v_M \in V_M} \dfrac{a(u-u_{\theta},v_M)}{\|v_M\|_{V}}\smallskip\\
& = \displaystyle \osc(u) + \dfrac{1}{C_{\Pi}}\sup_{0\neq v_M \in V_M} \dfrac{r(u_{\theta},v_M)}{\|v_M\|_{V}} \smallskip\\
& = \displaystyle \osc(u) + \dfrac{1}{C_{\Pi}}\sup_{0\neq v_M \in V_M} \dfrac{(\phi, v_M)_{V}}{\|v_M\|_{V}} \smallskip\\
& = \osc(u) + \dfrac{1}{C_{\Pi}}\displaystyle \|\phi\|_{V},
\end{array}
\end{equation*}
proving \eqref{eq:aposteriori_reliability_2}. 
\end{proof}

\begin{remark}[Existence of a local Fortin's operator]
First, notice that, for a given $\theta$, a Fortin operator's existence is only possible if $u_{\theta}\neq 0$ does not belong to the space $U_M^0$. Indeed, if $u_{\theta} \in U_M^0$, we have
\begin{equation*}
a(u_{\theta}\, , \Pi_\theta v) = 0, \text{  for all Fortin operator } \Pi_\theta,
\end{equation*}
since $\Pi_\theta v \in V_M$. Thus, in such a case, the residual representative $\|\phi\|_{V}$ is only a reliable estimate in the spirit of Theorem~\ref{thrm:aposteriori}. Nevertheless, it is expected that, in the minimization procedure, the NN solution approaches a solution of the Petrov-Galerkin problem~\eqref{eq:PG} (see Remark~\ref{rem:robustness}), implying that the NN solution will not belong to the space $U_M^0$ in a further iteration. Therefore, it is meaningful to assume that, after a sufficient number of iterations, the nonlinear solver will reach a parameter $\theta$ belonging to a neighborhood of a parameter $\theta^\ast$, in which $\theta^\ast$ is the local minimum, implying that a local semi-discrete inf-sup condition is satisfied, as stated in the following Lemma (cf.~\cite{fortin1977analysis}).
\end{remark}
\begin{lem}[Local Fortin's lemma]\label{lem:fortin}
If Assumption~\ref{as:fortin} is satisfied, then the following local semi-discrete inf-sup condition is satisfied:
\begin{equation}\label{eq:fortin_property}
\dfrac{\alpha}{C_\Pi} \|{u_\theta}\|_U \leq \sup_{0\neq v_M \in V_M} \dfrac{a({u_\theta}, v_M)}{\|v_M\|_{V}}, \quad \forall \, {u_\theta} \in U_{NN}^{\theta^\ast}:={\Span}\left\{ u_{\theta} : \theta \in B(\theta^\ast,R)\right\}.
\end{equation}
\end{lem}
\begin{proof} Using the properties of Fortin's operator and the linearity of the bilinear form $a(\cdot\, , \cdot)$ with respect to the first variable, it holds:
\begin{equation*}
 \sup_{0\neq v_M \in V_M} \dfrac{a({u_\theta}, v_M)}{\|v_M\|_{V}} \geq  \sup_{0\neq v \in V} \dfrac{a({u_\theta}, \Pi_\theta v)}{\|\Pi_\theta v\|_{V}} = \sup_{0\neq v \in V} \dfrac{a({u_\theta}, v)}{\|\Pi_\theta v\|_{V}} \geq \dfrac{1}{C_\Pi}\sup_{0\neq v \in V} \dfrac{a({u_\theta}, v)}{\|v\|_{V}} \geq \dfrac{\alpha}{C_\Pi}\|{u_\theta}\|_U.
 \end{equation*}
\end{proof}
\noindent
Finally, with the help of the previous Lemma, we can prove the following local a priori error estimate:
\begin{prop}[Local a priori error estimate I]\label{prop:local_apriori_1} If Assumption~\ref{as:fortin} is satisfied, it holds:
\begin{equation}\label{eq:local_inf_sup}
    \|u - u_{\theta^\ast}\|_U \leq \left(1 + \dfrac{\mu C_{\Pi}}{\alpha}\right) \inf_{{u_\theta}\in U_{NN}^{\theta^\ast}} \|u - {u_\theta}\|_U.
\end{equation}
\begin{proof}
First notice that, for all ${u_\theta} \in U_{NN}^{\theta^\ast}$, it holds
\begin{equation*}
   \dfrac{\alpha}{C_\Pi} \|u_{\theta^\ast}-{u_\theta}\|_U \leq \sup_{0\neq v_M \in V_M} \dfrac{a(u_{\theta^\ast}-{u_\theta},v_M)}{\|v_M\|_V} = \sup_{0\neq v_M \in V_M} \dfrac{a(u-{u_\theta},v_M)}{\|v_M\|_V}\leq \mu \|u-{u_\theta}\|_U.
\end{equation*}
Therefore, the result \eqref{eq:local_inf_sup} follows from the triangular inequality 
\begin{equation*}
    \|u-u_{\theta^\ast}\|_U \leq \|u-{u_\theta}\|_U + \|u_{\theta^\ast}-{u_\theta}\|_U.
\end{equation*}
\end{proof}
\end{prop}
\begin{remark}[Fortin's lemma converse] Following~\cite{ern2016converse}, it can be proved that the converse of Lemma~\ref{lem:fortin} is also valid. This could be useful in scenarios where proving the local semi-discrete inf-sup condition is simpler than proving the existence of a Fortin operator. {Note that the continuity constant of the Fortin operator is a measure of the loss of stability. Ideally, we would like to find a Fortin operator which is indeed an orthogonal projection, i.e., $C_{\Pi}=1$. However, the construction of a Fortin operator \cite{nagaraj2017construction} depends upon the formulation, the spaces, and the discretization. The construction of local Fortin operators for formulations involving NNs are out of the scope of this article.}
\end{remark}
\begin{remark}[Norm for the discrete space]
For the sake of simplicity, we assume that the norm for the discrete space $V_M$ coincides with the norm $\|\cdot\|_{V}$, of the space $V$. However, under certain conditions, it could be convenient to consider a different norm for the space $V_M$ being equivalent to $\|\cdot\|_{V}$. For instance, when properties of the discrete test allow obtaining a better inf-sup constant for the local semi-discrete inf-sup condition. In such a case, previous results are straightforwardly extended to this scenario.
\end{remark}
\subsection{{Error estimates in the quasi-minimizer sense}}\label{sec:quasi_mini}
{In the previous section, we derived a priori and a posteriori error estimates near the convergence regime by assuming the existence of a local solution of the Petrov-Galerkin-type problem~\eqref{eq:PG}. However, as mentioned in Remark~\eqref{rem:existence_PG}, this assumption cannot be satisfied in several practical scenarios. Additionally, we notice that our a priori error estimate does not take into consideration the contribution of the constraint $C(\cdot)$, since it is not necessarily true that a solution $u_{\theta^\ast}$ of the corresponding Petrov-Galerkin-type problem will also satisfy that $C(u_{\theta^\ast})$ will be bounded (up to a constant) by the infimum of $C(u_{\theta})$. This is the case when $C(\cdot)$ is defined in terms of noisy data and the DNN architecture allows for functions $u_\theta$ satisfying $C(u_\theta)=0$. Thus, a more general framework allowing for a less restrictive assumption must be considered. To this end, following~\cite{BREVIS2022}, we start by considering the following definition:
\begin{defn}[Quasi-minimizers] Let $\mathcal{L}: U \mapsto \mathbb{R}$ be a cost functional. Let $\delta_S>0$ and $U_{NN} \subset U$ be the manifold introduced in~\eqref{eq:UNN}, represented by a set $\theta \in \mathbb{R}^S$ of trainable parameters. A function $u_{\theta^S} \in U_{NN}$ is said to be a quasi-minimizer of $\mathcal{L}$ if the following holds true:
    \begin{equation}\label{eq:quasi_mini}
        \mathcal{L}(u_{\theta^S}) \leq \inf_{\theta \in \mathbb{R}^S} \mathcal{L}(u_{\theta}) + \delta_S.
    \end{equation}
\end{defn}
\noindent
The following is a direct consequence of the RVPINN cost function being positive-valued (cf. Eq.~\eqref{eq:norm_phi}).
\begin{prop}
For a given DNN structure $U_{NN}$, there exists a sufficiently small $\delta^\ast \geq 0$ and a function $u_{\theta^\ast} \in U_{NN}$ such that, for all $\delta_S > \delta^\ast$, $u_{\theta^\ast}$ is a quasi-minimizer of the cost functional~\eqref{eq:loss_RVPINNs}.
\end{prop}
}
\noindent
The following Lemma is a direct consequence of the quasi-minimizer definition and the property of the Riesz representative \eqref{eq:Riesz} (cf. proof of Theorem~\ref{thrm:aposteriori}):
\begin{lem}\label{lem:quasi}
    Let $u_{\theta^\ast}$ be a quasi-minimizer and $\delta^\ast >0$ be the smallest constant satisfying \eqref{eq:quasi_mini}. It holds
    \begin{equation}
        \left(\sup_{0\neq v_M \in V_M} \dfrac{a(u-u_{\theta^\ast},v_M)}{\|v_M\|_V}\right)^2 + C(u_{\theta^\ast})
        \leq \left(\sup_{0\neq v_M \in V_M} \dfrac{a(u-u_{\theta},v_M)}{\|v_M\|_V}\right)^2 +  C(u_{\theta}) + \delta^\ast, \quad \forall \, u_\theta \in U_{NN}.
    \end{equation}
\end{lem}
\noindent
We can now derive an a priori error estimate in the quasi-minimizer sense by considering the following relaxation of the Fortin's Assumption~\ref{as:fortin}\footnote{{This is indeed a relaxation of Assumption~\ref{as:fortin}, since in the case where there exists a solution of the Petrov-Galerkin-type problem~\eqref{eq:PG} and $C(u_\theta)=0$, we can consider $\delta_S=0$ and $\theta_S = \theta^\ast$.}}.
\begin{assumption}[Local Fortin's condition II]\label{as:fortin_2}
Given $\delta_S\ge0$ such that $u_{\theta_S} \in U_{NN}$ is a quasi-minimizer of~\eqref{eq:loss_RVPINNs}, there exists $R>0$ such that, for all $\theta \in B(\theta_S,R)$, an operator $\Pi_\theta: V\mapsto V_M$, and a $\theta$-independent constant $C_{\Pi}>0$, verifying the same properties of Assumption~\ref{as:fortin}.
\end{assumption}
\begin{prop}[Local a priori error estimate II] Let $u_{\theta^\ast}$ be a quasi-minimizer and $\delta^\ast >0$ be the smallest constant satisfying \eqref{eq:quasi_mini}. If Assumption~\ref{as:fortin_2} is satisfied, it holds:
\begin{equation}\label{eq:local_inf_sup_2}
    \|u - u_{\theta^\ast}\|^2_U + \left(\dfrac{\sqrt{2} C_{\Pi}}{\alpha}\right)^2C(u_{\theta^\ast}) \leq  \inf_{u_\theta\in U_{NN}^{\theta^\ast}} \left\{2\left(1 + \left(\dfrac{2\mu C_{\Pi}}{\alpha}\right)^2\right)\|u - u_\theta \|^2_U + \left(\dfrac{\sqrt{2}C_{\Pi}}{\alpha}\right)^2\big( C(u_{\theta}) + \delta^\ast \big)\right\} .
\end{equation}
\end{prop}
\begin{proof}
    The proof is similar as in Proposition~\ref{prop:local_apriori_1} by noticing that, for all $u_\theta \in U_{NN}^{\theta^\ast}$, with $U_{NN}^{\theta^\ast}$ defined as in \eqref{eq:fortin_property}, it holds
\begin{equation*}
\begin{array}{r@{\;}l}
   \left(\dfrac{\alpha}{2C_\Pi}\right)^2 \|u_{\theta^\ast}-u_\theta\|^2_U + \dfrac{1}{2}C(u_{\theta^\ast}) & \displaystyle \leq \frac{1}{4}\left(\sup_{0\neq v_M \in V_M} \dfrac{a(u_{\theta^\ast}-u_\theta,v_M)}{\|v_M\|_V}\right)^2  + \dfrac{1}{2}C(u_{\theta^\ast}) \smallskip \\
    & \displaystyle \leq \frac{1}{4}\left(\sup_{0\neq v_M \in V_M} \dfrac{a(u-u_\theta,v_M)}{\|v_M\|_V} + \sup_{0\neq v_M \in V_M} \dfrac{a(u-u_{\theta^\ast} ,v_M)}{\|v_M\|_V}\right)^2  + \dfrac{1}{2}C(u_{\theta^\ast}) \smallskip \\
    & \displaystyle \leq \left(\sup_{0\neq v_M \in V_M} \dfrac{a(u-u_\theta ,v_M)}{\|v_M\|_V}\right)^2  + \dfrac{1}{2}\left(C(u_\theta) + \delta^\ast\right),
   \end{array}
\end{equation*}
where the last inequality follows from the Young's inequality $2xy \leq x^2 + y^2, \, \forall x,y \in \mathbb{R}$, and Lemma~\ref{lem:quasi}.
\end{proof}
\section{Numerical examples}\label{Sec:NR}
To illustrate the proposed strategy's performance, we consider several diffusion-advection problems subject to Dirichlet-type boundary conditions. We focus on problems that either are challenging for the standard finite element method (advection-dominated problems) or do not admit a solution belonging to $H^2(\Omega)$. We consider two distinct discrete test spaces to construct the loss functionals: one employing standard FE piece-wise polynomials and another employing spectral orthonormal test functions. Here, we do not intend to compare the performance between different test spaces, but rather, we aim to numerically validate the robustness of RVPINNs.
\subsection{Diffusion-advection model problem}
For $d\geq1$, given a bounded and open Lipschitz polyhedra $\Omega \in \mathbb{R}^d$ with boundary $\partial \Omega$, this work considers the following linear diffusion-advection problem:\\ 
Find $u$, such that:
\begin{equation}\label{eq:strongDAR}
\begin{array}{r@{\;}l}
- \nabla \cdot \left(\varepsilon \nabla u\right) + \mathbf{\beta} \cdot \nabla u  &= f,  \quad \text{ in } \Omega,  \smallskip \\
u &= 0, \quad \text{ on } \partial \Omega,
\end{array}
\end{equation}
where $\varepsilon>0$ denotes a diffusive coefficient term,
$\mathbf{\beta} \in [L^\infty(\Omega)]^d$ is a divergence-free (i.e., $\text{div}\beta \equiv 0$) advective coefficient function. Finally, $f \in V^\prime$ is a given source term, with $V^\prime:= H^{-1}(\Omega)$ being the dual space of $V:=H_0^1(\Omega)$. Problem~\eqref{eq:strongDAR} admits the following continuous variational formulation:
\begin{equation}\label{eq:weakDAR}
\text{Find } u \in U:= V \, : \,r(u\, ,v) := l(v)-a(u\, ,v), \, \forall \, v \in V,
\end{equation}
with
\begin{equation}\label{eq:a_l_DAR}
\displaystyle a(u\, ,v) := {\left( \mathbf{\varepsilon} \nabla u \, , \nabla v\right)_0 + \left(\mathbf{\beta} \cdot \nabla u \, , v\right)_0} , \quad \text{and} \quad l(v) = \langle f\, ,  v\rangle,
\end{equation}
where $(\cdot\, , \cdot)_0$ denotes the $L^2$-inner product\footnote{We adopt the same notation for the scalar and vectorial $L^2$-inner products for the sake of simplicity.}, and $\langle\cdot \, , \cdot\rangle$ denotes the duality map between $V^\prime$ and $V$, coinciding with $(f\, , v)_0$ when $f$ belongs to $L^2(\Omega)$. \\
Let us equip the Hilbert spaces $U, V$ with the norms 
\begin{equation}\label{eq:DAR_norms}
\|\cdot\|^2_U:= \| \cdot\|^2_V:= \varepsilon \,(\nabla \cdot\, , \nabla \cdot)_0.
\end{equation}
Using the Cauchy-Schwarz inequality, the following boundedness estimate is proved (cf.~\eqref{eq:bound_cont}):
\begin{equation}\label{eq:boundedness_DAR}
a(w \, , v) \leq \left(1+\dfrac{C_\Omega\, \|\beta\|_{\infty}}{\varepsilon} \right) \, \|w\|_U \|v\|_V, \quad \forall \, w \in U, v \in V,
\end{equation}
where $C_\Omega$ denotes the Poincar\'e constant (see~\cite{payne_weinberger_1960,acosta2004optimal}). Moreover, as a consequence of the fact that (recall that $\text{div} \beta \equiv 0$):
\begin{equation}\label{eq:div_cond}
    (\beta\cdot\nabla v \, , v)_0 = 0, \quad \forall \, v \in V,
\end{equation}
a straightforward verification shows that the following coercive estimate holds: 
\begin{equation}\label{eq:coercivity_DAR}
a(v \, , v) \geq \|v\|^2_V, \quad \forall \, v\in V.
\end{equation}
Notice also that the coercive property~\eqref{eq:coercivity_DAR} implies that the following inf-sup condition is satisfied (cf.~\eqref{eq:inf_sup_cont}):
\begin{equation}\label{eq:inf_sup_DAR}
    \sup_{0\neq v \in V} \dfrac{a(w\,,\,v)}{\|v\|_V} \geq \|w\|_U, \quad \forall \,  w \in U.
\end{equation}
\begin{remark}[Alternative Variational Formulations]\label{rem:alternative_VF}
With the spirit of considering a favorable scenario for spectral and FEM test spaces, we consider a standard weak variational formulation with the same test and trial spaces. However, we recall that the strategy is general, allowing for taking into account alternative formulations as a starting point. As an illustrative example, consider the pure diffusive particular case of problem \eqref{eq:strongDAR} (i.e., with $\varepsilon = 1$, $\mathbf{\beta} = {\bf 0}$). Then, the following two alternative variational formulations of the form~\eqref{eq:weakDAR} are admissible:
\begin{enumerate}
    \item[a)] Strong VF: $U=H^2(\Omega) \cap H^1_0(\Omega)$,  $V=L^2(\Omega)$,  $a(u,v) = -\left( \Delta u \, , v\right)_0$ \, , $l(v) = \langle f\, ,  v\rangle$.
    \item[b)] Ultraweak VF: $U=L^2(\Omega)$, $V = H^2(\Omega) \cap H^1_0(\Omega)$,  $a(u,v) = -\left( u \, , \Delta v\right)_0$ \, , $l(v) = \langle f\, ,  v\rangle$.
\end{enumerate}
Moreover, it can be proved that conditions \eqref{eq:bound_cont}-\eqref{eq:inf_sup_cont} are satisfied by endowing the Hilbert space $H^2(\Omega) \cap H^1_0(\Omega)$ with the norm $\left( \Delta \cdot \, , \Delta \cdot \right)_0^{1/2}$, and $L^2(\Omega)$ with its standard norm.
\end{remark}
\begin{subsection}{{A weak BCs imposition for 1D problems}}\label{sec:weak}
Setting $\Omega = (-1,1)$, $U = H^1(\Omega)$, $V = H_0^1(\Omega)$, and $\widehat{u}=(u(-1), u(1))$, the variational formulation is defined as follows\footnote{{Notice that this way of imposing BCs can also be applied to other alternative variational formulations. For instance, the Strong VF in the spirit of Remark~\ref{rem:alternative_VF}.}}
\begin{equation}\label{eq:weak_BCs_VF}
    \text{Find } u \in U : a(u,(v,\widehat{v})):= ( \varepsilon u^\prime \, ,  v^\prime)_0 + (\beta u^\prime \, , v)_0 + \widehat{u} \cdot \widehat{v} = l((v,\widehat{v})) := \langle f\,, v \rangle, \, \forall \, (v,\widehat{v}) \in V \times \mathbb{R}^2. 
\end{equation}
Variational formulation ~\eqref{eq:weak_BCs_VF} is well-possed. Indeed, defining $\|\cdot\|^2_V:= \varepsilon \,( \cdot^\prime\, , \cdot^\prime)_0$ and denoting by $\|\cdot\|_E$ the Euclidean norm of $\mathbb{R}^2$, we can consider the norms
\begin{equation}
    \|u\|_{U}^2 := \|u\|_{V}^2 + \|\widehat{u}\|_{E}^2 \quad \text{and} \quad \|(v,\widehat{v})\|_{V\times\mathbb{R}^2}^2 := \|v\|_{V}^2 + \|\widehat{v}\|_{E}^2,
\end{equation}
for the trial and test space, respectively. A straightforward verification proves that the boundedness condition~\eqref{eq:bound_cont} is satisfied with the upper bound constant $\mu$ of Eq. \eqref{eq:boundedness_DAR}. 
It is also immediate to verify that the condition \eqref{eq:A_prime_kernel} is satisfied. Indeed, noticing that $(\beta v^\prime, v)_0 = 0$, for all $v\in V$ (cf. Eq.~\eqref{eq:div_cond}), if $a(u,(v,\widehat{v})) =0 $ holds for all $u \in U$, then it holds for $u = v + u_{\widehat{v}}$, with the last being the (unique) solution of the problem:
\begin{equation*}
    \text{Find } u_{\widehat{v}} \in U : -\varepsilon u_{\widehat{v}}^{\prime \prime} + \beta u_{\widehat{v}}^{\prime} = 0, \quad u_{\widehat{v}}(-1) = \widehat{v}(1), \quad u_{\widehat{v}}(1) = \widehat{v}(2),
\end{equation*}
implying that $ \|(v,\widehat{v})\|_{V\times\mathbb{R}^2}$ must be equal to zero. Finally, we can prove that the inf-sup condition ~\eqref{eq:inf_sup_cont} is satisfied with $\alpha = 1$ by first noticing that it is trivially satisfied for all $u \in V$. In addition, for a given $u$ such that $\widehat{u}\neq0$, given $v = u - u_d$ with $u_d$ any lifting of the boundary data $\widehat{u}$, and $\widehat{v} = \widehat{u} - \eta \dfrac{\widehat{u}}{\|\widehat{u}\|^2_E}$, with $\eta = -(\varepsilon u^\prime, u_d^\prime)_0 + (\beta u^\prime, u - u_d)_0$, it holds
\begin{equation*}
    \displaystyle \frac{a(u,(v,\widehat{v}))}{\|(v,\widehat{v})\|_{V\times\mathbb{R}^2}} = \|u\|_{U}.
\end{equation*}
The result follows from noticing that the supreme over $V\times\mathbb{R}^2$ is always larger or equal to the left-hand side of the above equality.\\
With this alternative definition, it can be proved that the corresponding loss functional is given by
\begin{equation}\label{eq:loss_weak_weak}
    \mathcal{L}^{\phi}_{r}(u_{\theta}) = \|\phi\|_V^2  + |u_\theta(-1)|^2 + |u_\theta(1)|^2 + C(u_{\theta}),
\end{equation}
with $\phi$ being the residual representative in $V_M \subseteq V$ with residual $r(u,\cdot)$ defined as in the variational formulation \eqref{eq:weakDAR}. Indeed, choosing the canonical basis for $\mathbb{R}^2$, the corresponding (block-diagonal) Gram matrix $\widehat{G}$ associated to the inner product inducing the norm for the space $V\times\mathbb{R}^2$, and the rhs $\widehat{\cal R}(\theta)$, respectively, have the following structure
\begin{equation}\label{eq:Gram_full_weak}
 \widehat{G}  = \left(\begin{matrix}
        G & \\
         & I_2
    \end{matrix}\right) \quad \text{and} \quad \widehat{\cal R}(\theta) = \left(\begin{matrix}
         {\cal R}(\theta)\\
         -u_\theta(-1) \\
         -u_\theta(1)
    \end{matrix}\right),
\end{equation}
with $I_2$ denoting the $2\times2$ identity matrix, $G$ the Gram matrix associated with the inner product of $V$, and ${\cal R}(\theta)$ the rhs associated with the residual $r(u_\theta,\cdot):=\langle f\,, \cdot \rangle -( \varepsilon u^\prime \, ,  \cdot^\prime)_0 - (\beta u^\prime \, , \cdot)_0$ in $V^\prime$.
\begin{remark}[Penalized weak BCs imposition]
We notice that it is also possible to consider a penalization for the weak imposition by multiplying the term $\widehat{u}\times\widehat{v}$ with a (possibly boundary-dependent) positive penalization term. This could serve as a further stabilization strategy in complex scenarios. For instance, in the advection-dominated regime.
\end{remark}
\end{subsection}
\subsection{Discrete setting}\label{sec:discrete_setting}
{For the sake of simplicity, in all the numerical examples we assume that there is no data available for interpolation; thus, we consider $C(u_\theta)=0$. For one-dimensional problems, we} set $\Omega:= (-1,1)$ and define two kinds of test spaces. First, we consider $V_M$ as FE space of the standard globally continuous and piece-wise linear functions that belong to $H_0^1(\Omega)$, defined over a uniform mesh partition of $\Omega$ into $M+1$ elements $[x_i, x_{i+1}]$, with $x_0=-1<x_1<\dots<x_M<x_{M+1}=1$. {Similarly, we consider a 2D case where $\Omega:= (0,1)^2$. Here, we employ tensor-product piece-wise linear FEM test functions}. The corresponding Gram matrix {in both cases} is constructed using the inner product, inducing the norm $\|\cdot\|_V$, defined in~\eqref{eq:DAR_norms}. {Herein, we compute $G^{-1}$ explicitly in both 1D and 2D examples employing FEM test spaces.} 

\noindent The second test space is intended to show examples involving an orthonormal basis for the discrete space $V_M$. Here, we normalize with respect to the inner product~\eqref{eq:DAR_norms} the first $M$ eigenfunctions of the Laplacian satisfying the Dirichlet homogeneous boundary conditions. The $m$-th eigenfunction and the corresponding orthonormal set, respectively, are given by
\begin{equation}\label{eq:spectral_1D}
s_m := \sin\left(m\pi \dfrac{x+1}{2}\right), \qquad E_M = \left\{\varphi_m := \dfrac{2 s_m}{\sqrt{\varepsilon} \pi m}  \right\}_{m=1}^M.
\end{equation}
Therefore, the loss functional~\eqref{eq:loss_RVPINNs_ortho} is explicitly given as:
\begin{equation}\label{eq:loss_DAR_spectral_1D}
\mathcal{L}^{\phi}_{r}(u_{\theta}) = \dfrac{4}{\varepsilon \pi^2}\sum_{m=1}^M  \dfrac{1}{m^2} \, r\left(u_{\theta}\,  ,  s_m \right)^2.
\end{equation}
\noindent
We also consider a two-dimensional numerical example involving an orthonormal basis for the discrete test space. In such a case, we set $\Omega:=(0,1)^2$ and construct the first $M$ and $N$ eigenfunctions in the $x$ and $y$ coordinates, respectively.  Here, the $m,n$-th eigenfunction and the corresponding orthonormal set, respectively, are given by
\begin{equation}\label{eq:spectral_2D}
s_{m,n} := \sin\left(m\pi x\right)\sin\left(n\pi y\right), \qquad E_{MN} = \left\{\varphi_{m,n} := \dfrac{s_{m,n}}{\sqrt{\varepsilon} \pi (m+n)}  \right\}_{m=1,n=1}^{M,N}.
\end{equation}
Thus,  the loss function~\eqref{eq:loss_RVPINNs_ortho} is explicitly given as:
\begin{equation}\label{eq:loss_DAR_spectral_2D}
\mathcal{L}^{\phi}_{r}(u_{\theta}) = \dfrac{1}{\varepsilon \pi^2}\sum_{n=1}^N \sum_{m=1}^M  \dfrac{1}{n^2+m^2} \, r\left(u_{\theta}\,  ,  s_{m,n} \right)^2.
\end{equation}
\noindent
For the architecture of the DNN approximator $u_\theta$, we adopt the following structure for the 1D problems: a fixed feed-forward fully connected network comprising five layers, each with 25 neurons. Throughout all layers, the activation function employed is $\tanh$. When strongly enforcing the homogeneous Dirichlet boundary conditions, we multiply the last layer's output by $(x+1)(x-1)$. Another version includes the {weak} imposition of homogeneous Dirichlet boundary conditions by adding to the functional loss the following quantity (see Section~\ref{sec:weak}):
\begin{equation}\label{eq:weak_bcs}
|u_\theta(-1)|^2 + |u_\theta(1)|^2.
\end{equation}
\noindent
As our choice for the nonlinear solver, we opt for the ADAM optimizer (refer to~\cite{kingma2014adam}) and initialize it with a learning rate of $0.0005$. Our nonlinear solver undergoes up to $6000$ iterations (epochs).
To approximate the integral terms appearing in the loss functional definitions of the 1D problems, we employ (a) a fixed Gaussian quadrature rule with five nodes per element when considering FE functions, and (b) a trapezoidal rule with $4000$ equally-distributed nodes when considering spectral functions. Lastly, the error $|u - u_\theta|_U$ is numerically estimated using a trapezoidal rule, with $10000$ equally-distributed nodes. For the 2D example, we consider a fixed feed-forward fully connected network comprising four layers, each with 40 neurons and the same type of activation function as in the 1D cases. For the training {in the spectral case}, $500 \times 500$ equally-distributed integration points, while equally-distributed $1000 \times 1000 $ integration points for estimating the error $|u - u_\theta|_U$. We strongly impose the Dirichlet boundary conditions by multiplying the last layer's output by $x(x-1)y(y-1)$.
\subsection{Smooth diffusion problems}
We first consider a simple diffusion problem with a smooth solution. More precisely, setting $\Omega = (-1,1)$, we consider the variational problem \eqref{eq:weakDAR} with $\varepsilon=1$, $\beta = 0$ and forcing term $f$ defined in such a way that the analytical solution is given by
\begin{eqnarray*}
u(x) = x\sin(\pi(x+1)).
\end{eqnarray*}
Figure~\ref{fig:smooth_spectral_strong_50} shows the numerical results obtained with RVPINNs with $50$ sinusoidal test functions and strong imposition of the BCs. We observe a good performance of the RVPINNs strategy, obtaining an accurate approximation of the analytical solution, as shown in Figure~\ref{fig:smooth_spectral_a}. Moreover, we observe a perfect match between the quantities $\|u-u_\theta\|_U$, $\|\phi\|_V$, and $\sqrt{\mathcal{L}^\phi_r(u_\theta)}$ as shown in figures~\ref{fig:smooth_spectral_b} and ~\ref{fig:smooth_spectral_c} where, in particular, the last one shows a perfect correlation between the true error and the square root of the loss functional. Here, we compute $\|\phi\|_V$ employing the right-hand-side of \eqref{eq:residual_computation}. Note that $\|\phi\|_V$ and $\sqrt{\mathcal{L}^\phi_r(u_\theta)}$ coincide when strongly imposing BCs. Figure~\ref{fig:smooth_FEM_strong_100} shows an almost identical behavior when considering $100$ FE test functions and strong imposition of BCs. We obtain similar results when considering the weak imposition of BCs, but we omit them here for brevity.\\
Note that in the case of pure diffusive problems, the continuity constant $\mu$ and the inf-sup stability constant $\alpha$ are equal to one when considering the $H^1$-seminorm (cf.~\eqref{eq:boundedness_DAR} and \eqref{eq:inf_sup_DAR}). Therefore, $\|\phi\|_V$ always defines a lower bound for the error (cf.~\eqref{eq:aposteriori_lower}) and an upper bound up to a constant close to one if the oscillation term in \eqref{eq:aposteriori_reliability_2} is sufficiently small. These theoretical considerations are confirmed in figures~\ref{fig:smooth_spectral_strong_50} and \ref{fig:smooth_FEM_strong_100}.\\
Finally, to show the effect of the oscillation term \eqref{eq:aposteriori_reliability_2}, Figure~\ref{fig:smooth_FEM_strong_5} shows the results obtained with only five test FE basis functions. Figure~\ref{fig:smooth_FEM_strong_5_b} reveals that the estimation is not robust after (approximately) $100$ iterations and only becomes a lower bound, as the DNN has reached (up to implementation precision) a solution of the Petrov-Galerkin problem ~\eqref{eq:PG}.
\begin{figure}
    \centering        
     \begin{subfigure}[b]{0.3\textwidth}
         \centering
         \includegraphics[width=\textwidth]{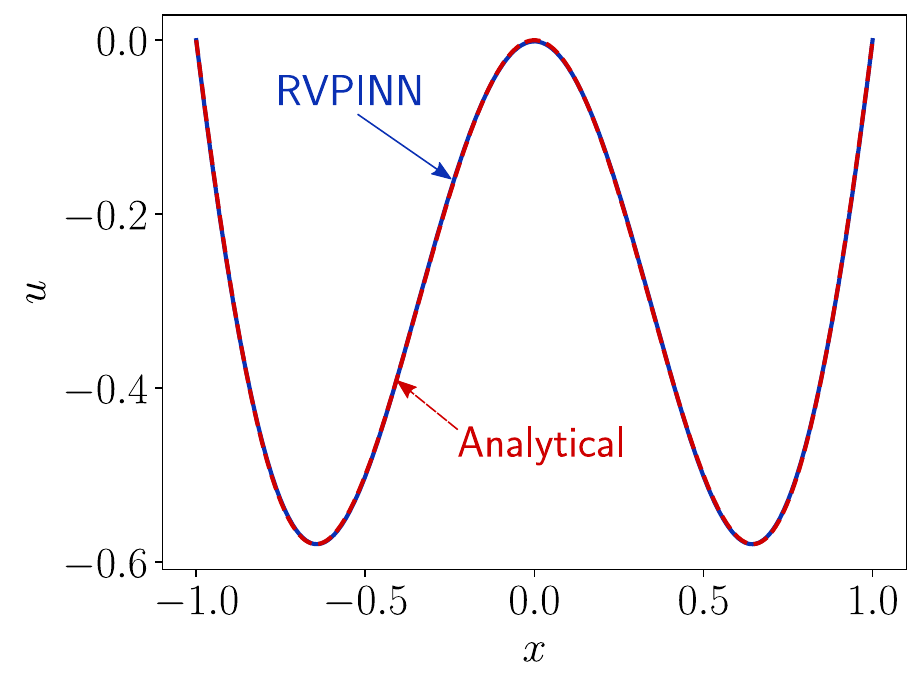}
         \caption{DNN best approximation}
         \label{fig:smooth_spectral_a}
     \end{subfigure}
     \hfill
     \begin{subfigure}[b]{0.3\textwidth}
         \centering
         \includegraphics[width=\textwidth]{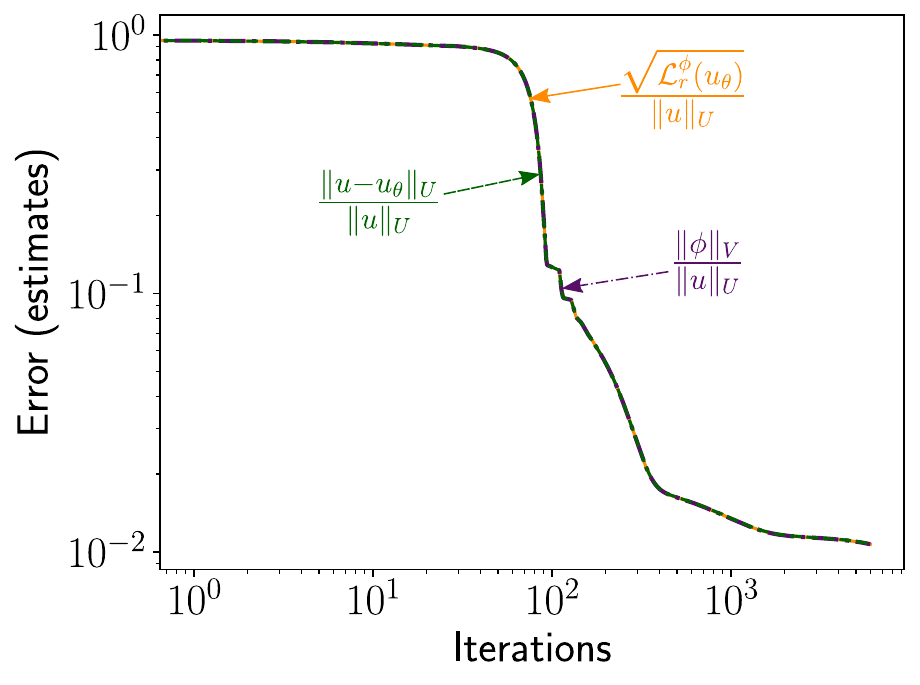}
          \caption{Estimates vs Iterations}
         \label{fig:smooth_spectral_b}
     \end{subfigure}
     \hfill
     \begin{subfigure}[b]{0.3\textwidth}
         \centering
         \includegraphics[width=\textwidth]{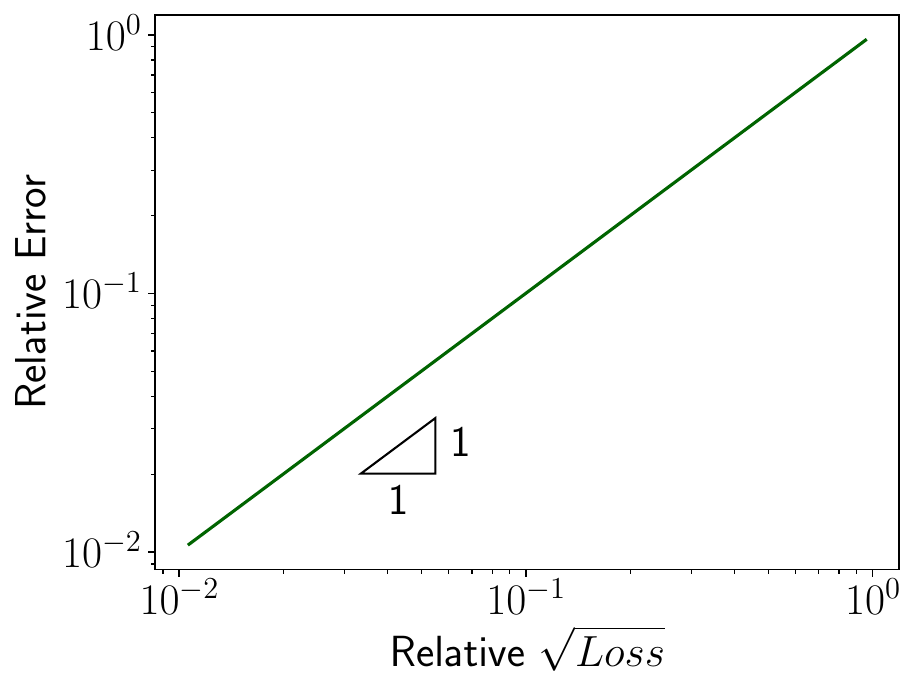}
         \caption{Error-Loss comparison}
         \label{fig:smooth_spectral_c}
     \end{subfigure}
     \hfill
    \caption{RVPINNs approximation of the smooth diffusion problem; strong BCs imposition, $50$ spectral test functions.}
    \label{fig:smooth_spectral_strong_50}
\end{figure}

\begin{figure}
    \centering        
     \begin{subfigure}[b]{0.3\textwidth}
         \centering
         \includegraphics[width=\textwidth]{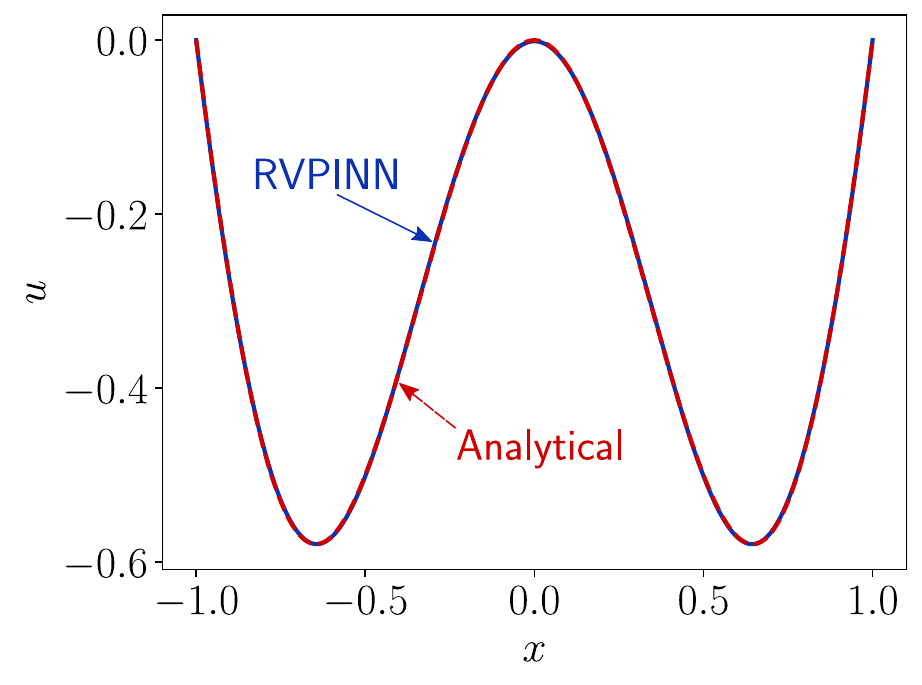}
         \caption{DNN best approximation}
         \label{fig:smooth_FEM_a}
     \end{subfigure}
     \hfill
     \begin{subfigure}[b]{0.3\textwidth}
         \centering
         \includegraphics[width=\textwidth]{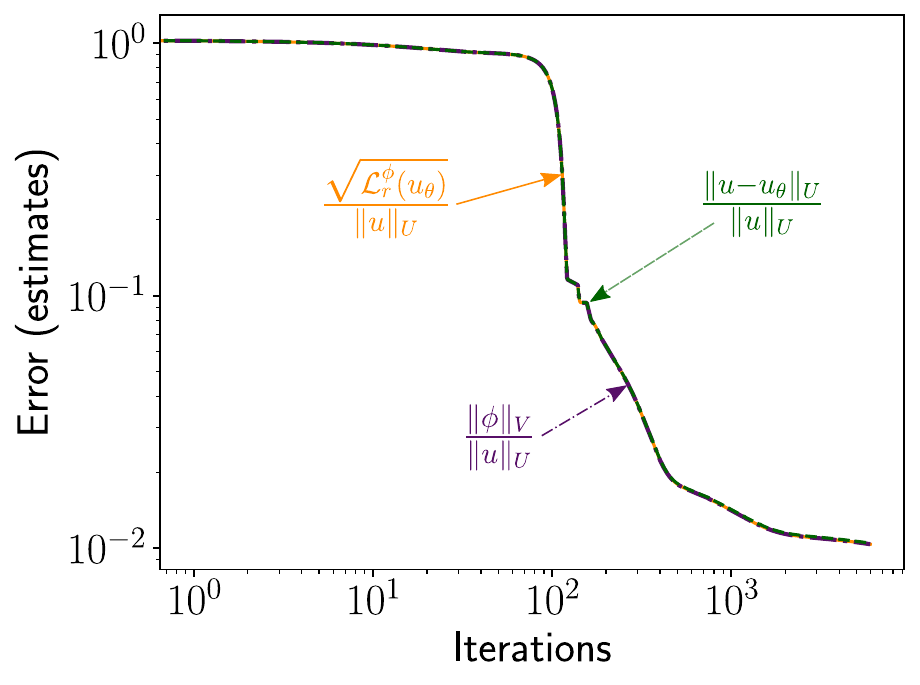}
         \caption{Estimates vs Iterations}
         \label{fig:smooth_FEM_b}
     \end{subfigure}
     \hfill
     \begin{subfigure}[b]{0.3\textwidth}
         \centering
         \includegraphics[width=\textwidth]{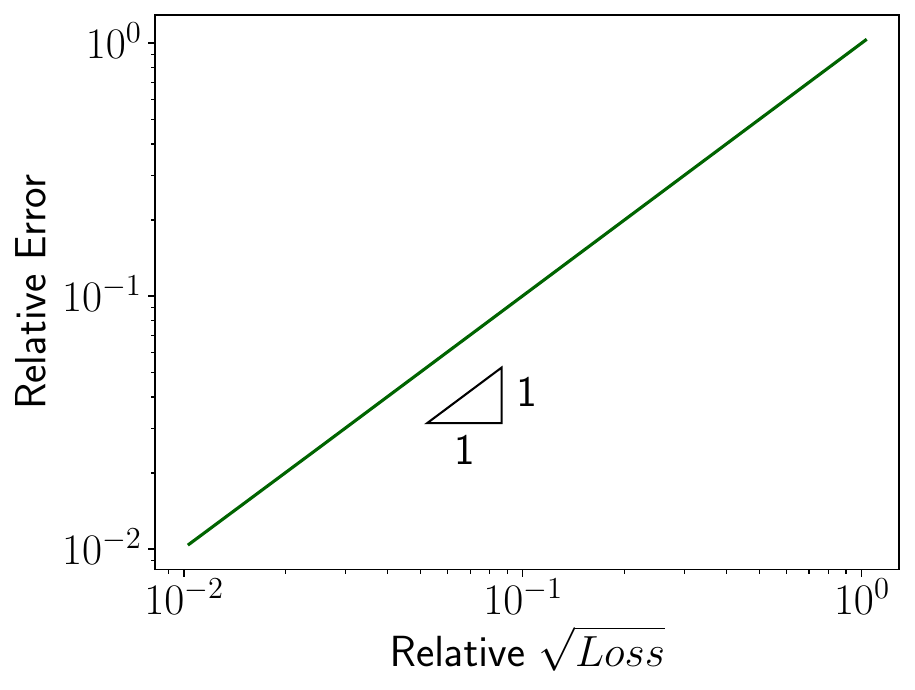}
         \caption{Error-Loss comparison}
         \label{fig:smooth_FEM_c}
     \end{subfigure}
     \hfill
    \caption{RVPINNs approximation of the smooth diffusion problem; strong BCs imposition, $100$ FE test functions.}
    \label{fig:smooth_FEM_strong_100}
\end{figure}

\begin{figure}
    \centering        
     \begin{subfigure}[b]{0.3\textwidth}
         \centering
         \includegraphics[width=\textwidth]{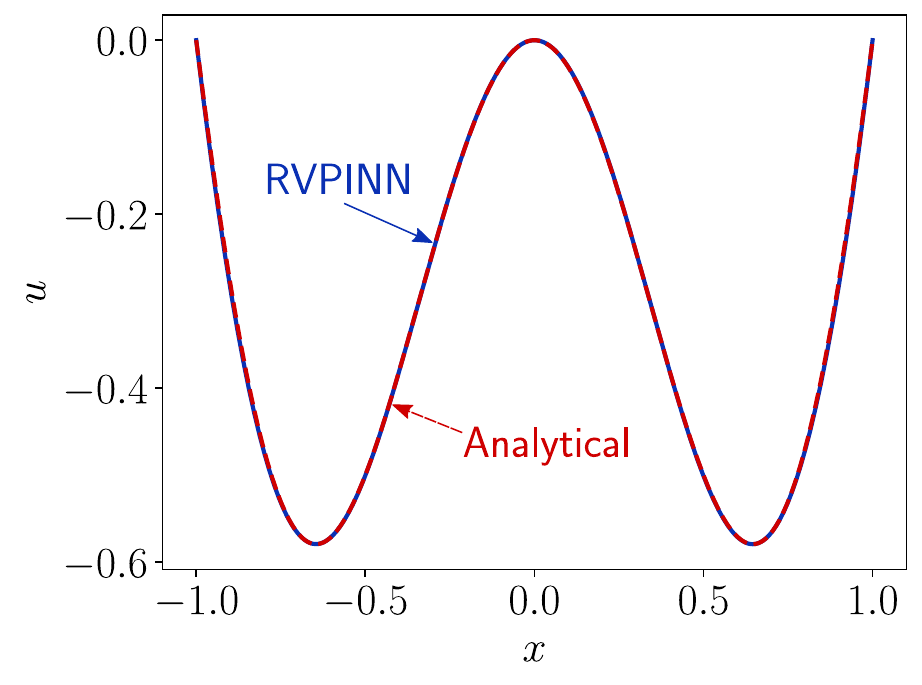}
         \caption{DNN best approximation}
     \end{subfigure}
     \hfill
     \begin{subfigure}[b]{0.3\textwidth}
         \centering
         \includegraphics[width=\textwidth]{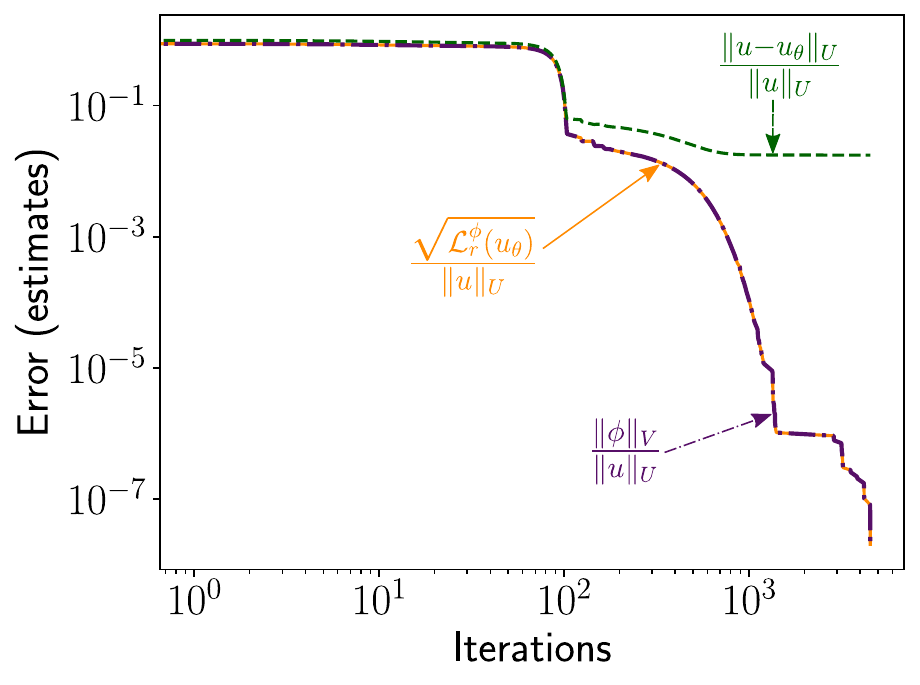}
         \caption{Estimates vs Iterations}
         \label{fig:smooth_FEM_strong_5_b}
     \end{subfigure}
     \hfill
     \begin{subfigure}[b]{0.3\textwidth}
         \centering
         \includegraphics[width=\textwidth]{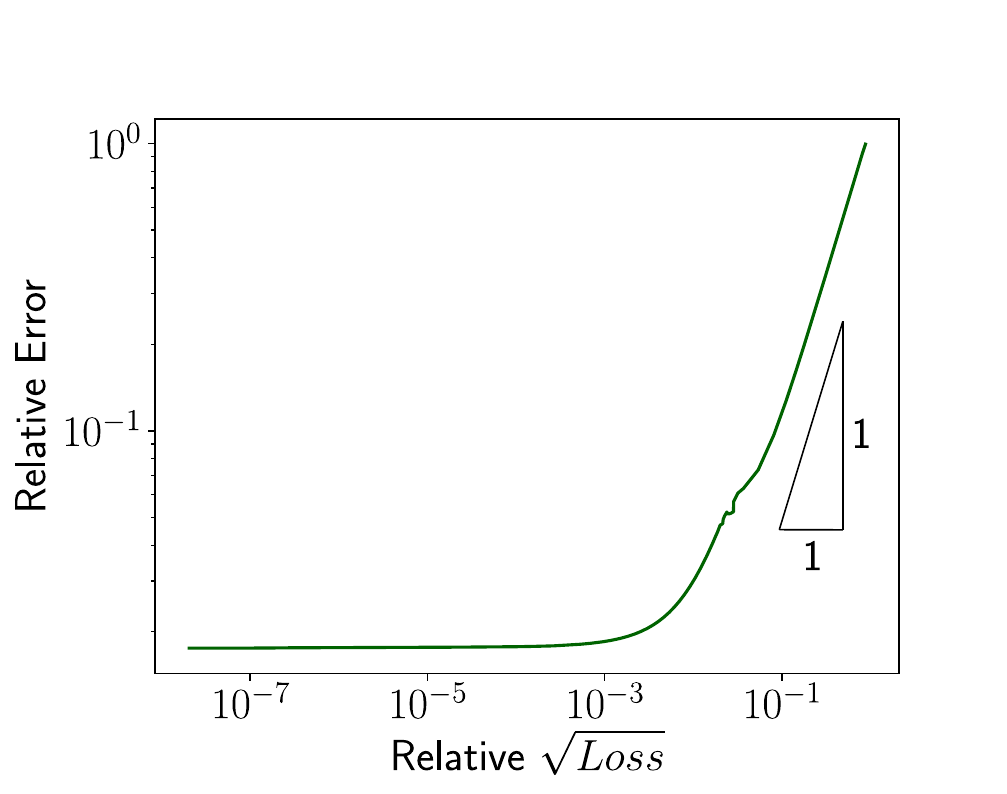}
         \caption{Error-Loss comparison}
     \end{subfigure}
     \hfill
    \caption{RVPINNs approximation of the 1D smooth diffusion problem; strong BCs imposition, $5$ FE test functions.}
    \label{fig:smooth_FEM_strong_5}
\end{figure}

\noindent
To conclude this section, we show the results obtained for a 2D pure diffusion example, set in the domain $\Omega=(0,1)^2$ analogously so the analytical solution coincides with
\begin{equation}
    u(x,y) = \sin(\pi x) \sin(\pi y).
\end{equation}
Figure~\ref{fig:2Dsmooth_spectral_strong_50} 
displays the results when we consider the spectral loss functional~\eqref{eq:spectral_2D} during training with $M=N=20$ and strong imposition of the BCs. 
Similarly,  Figure~\ref{fig:2Dsmooth_FEM_strong_50} displays the results when considering a $100\times100$-element mesh with piece-wise linear FEM test functions. We observe a similar behavior, up to some oscillation due to the inexact integration,  to the 1D case, showing in particular that the strategy is dimension-independent, as expected.\\

\noindent
To showcase the behavior of RVPINNs in extreme scenarios, in the following, we will consider 1D challenging problems for the particular setting described in Section~\ref{sec:discrete_setting}.
\begin{figure}
    \centering        
     \hfill
     \begin{subfigure}[b]{0.3\textwidth}
         \centering
         \includegraphics[width=\textwidth]{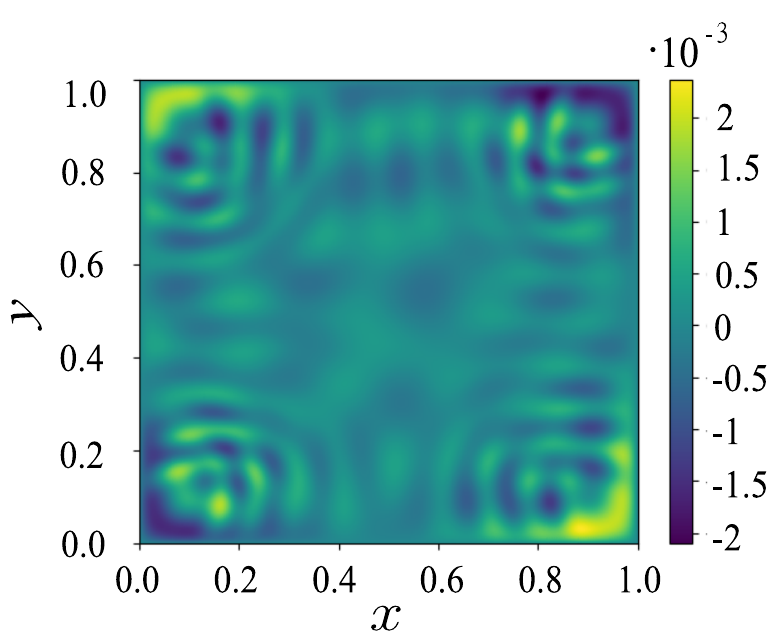}
         \caption{$u-\text{(best) }u_\theta$}
         \label{fig:2Dsmooth_spectral_b}
     \end{subfigure}
     \hfill
     \begin{subfigure}[b]{0.3\textwidth}
         \centering
         \includegraphics[width=\textwidth]{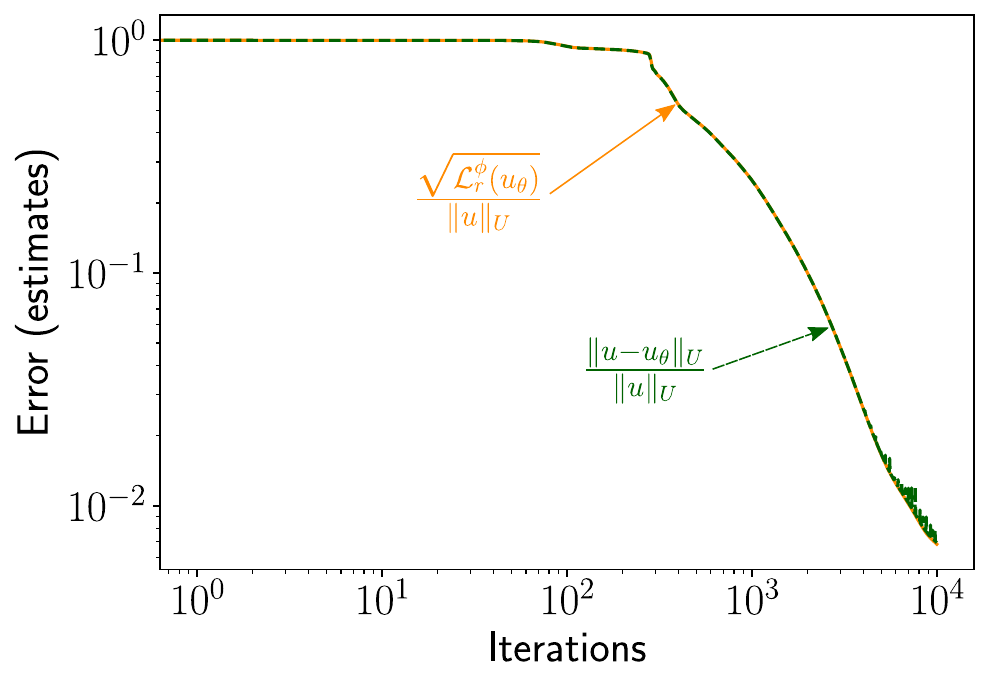}
          \caption{Estimates vs Iterations}
         \label{fig:2Dsmooth_spectral_c}
     \end{subfigure}
     \hfill
     \begin{subfigure}[b]{0.3\textwidth}
         \centering
         \includegraphics[width=\textwidth]{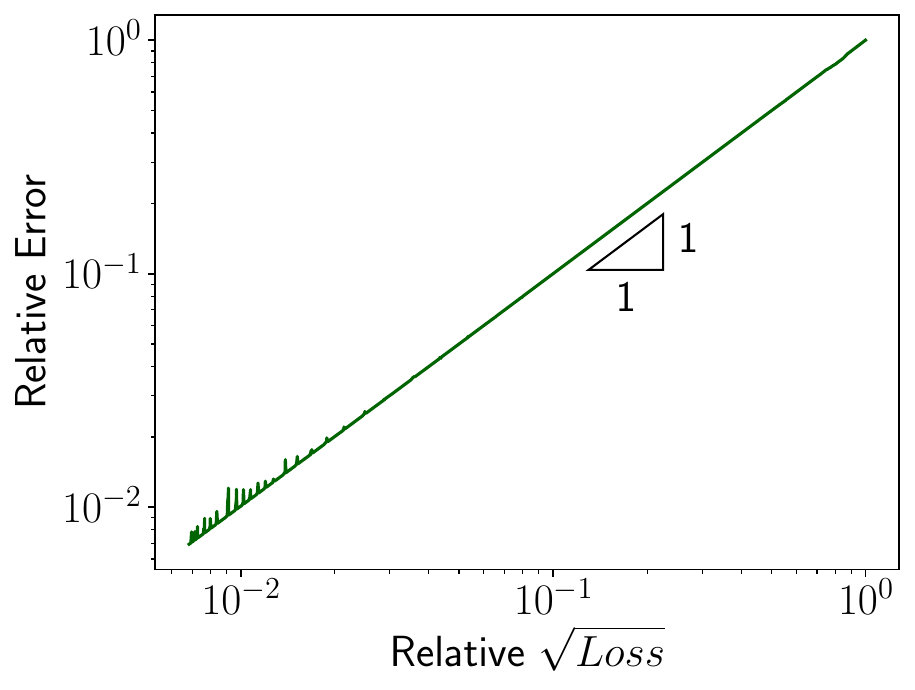}
         \caption{Error-Loss comparison}
         \label{fig:2Dsmooth_spectral_d}
     \end{subfigure}
     \hfill
    \caption{RVPINNs approximation of the 2D smooth problem; strong BCs imposition, $20 \times 20$ spectral test functions.}
    \label{fig:2Dsmooth_spectral_strong_50}
\end{figure}
\begin{figure}
    \centering        
     \hfill
     \begin{subfigure}[b]{0.3\textwidth}
         \centering
         \includegraphics[width=\textwidth]{figures/2D/2D_FEM_difference.png}
         \caption{$u-\text{(best) }u_\theta$}
         \label{fig:2Dsmooth_FEM_b}
     \end{subfigure}
     \hfill
     \begin{subfigure}[b]{0.3\textwidth}
         \centering
         \includegraphics[width=\textwidth]{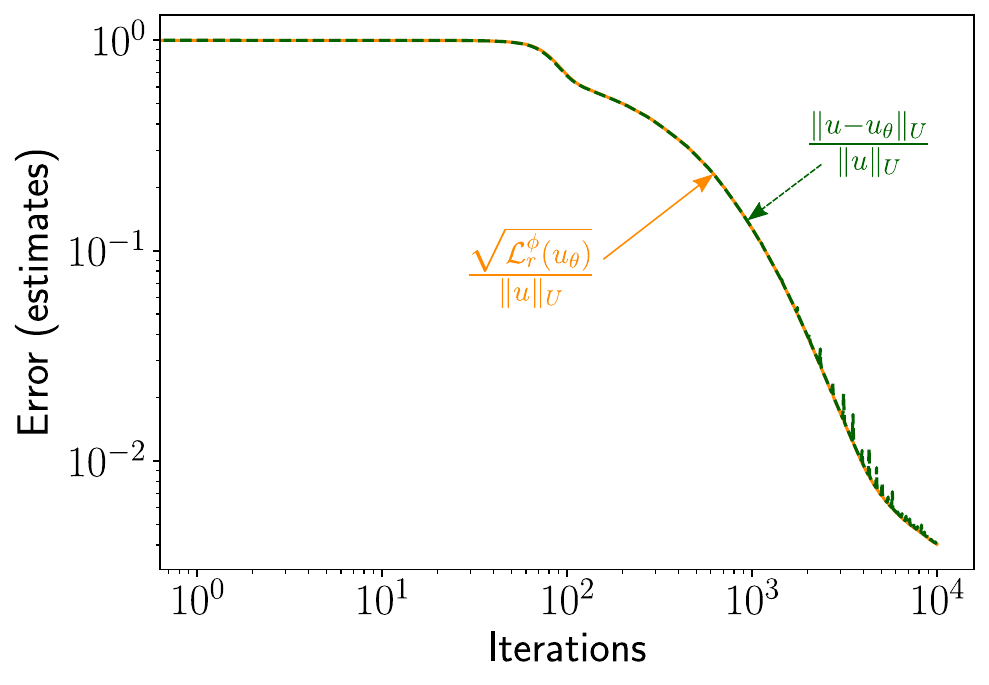}
          \caption{Estimates vs Iterations}
         \label{fig:2Dsmooth_FEM_c}
     \end{subfigure}
     \hfill
     \begin{subfigure}[b]{0.3\textwidth}
         \centering
         \includegraphics[width=\textwidth]{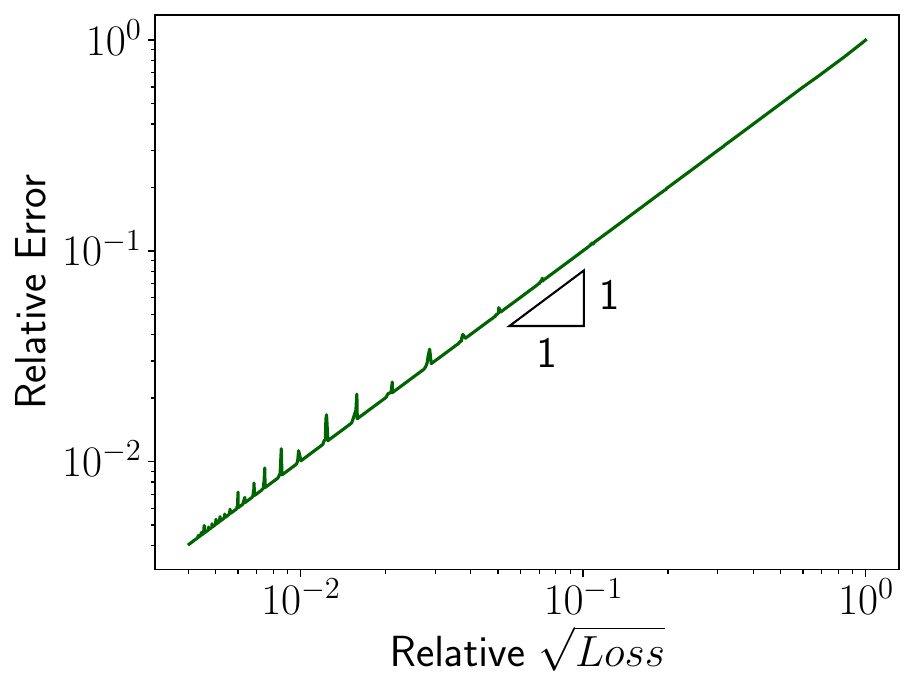}
         \caption{Error-Loss comparison}
         \label{fig:2Dsmooth_FEM_d}
     \end{subfigure}
     \hfill
    \caption{RVPINNs approximation of the 2D smooth problem; strong BCs imposition, $100 \times 100$ elements mesh with piece-wise linear FEM test functions.}
    \label{fig:2Dsmooth_FEM_strong_50}
\end{figure}

\subsection{Delta source problem}\label{sec:delta}
{We consider again the pure 1D diffusion problem (i.e., eq.~\eqref{eq:weakDAR} with $\varepsilon = 1$ and $\beta = 0$), defined over $\Omega = (-1,1)$ and subject to homogeneous Dirichlet BCs, but now with a Dirac delta source $\delta_{1/2} \in H^{-1}(\Omega)$, that is explicitly defined through the action:
\begin{equation}
l(v) := \langle \delta_{1/2}\, , \, v \rangle := v(1/2), \quad \forall \, v \in H_0^1(\Omega).
\end{equation}
The analytical solution to this problem is explicitly given as: 
\begin{equation*}
u(x) = \left\{ \begin{array}{r@{\;}r}
\dfrac{1}{4}(x+1), & \quad   -1\leq x\leq \dfrac{1}{2}, \smallskip\\
\dfrac{3}{4}(1-x), & \quad  \dfrac{1}{2} < x\leq 1,
\end{array}
\right.
\end{equation*}
which is only a $C^0(\Omega)$ solution, while the activation function $\tanh$ is smooth. Therefore, a major dominance of the oscillation term is expected, even when considering a sufficiently large number of test functions, as appreciated in Figure~\ref{fig:delta_FEM_strong_100} where we have considered $100$ FE test functions and strong imposition of the BCs. Figure~\ref{fig:delta_FEM_constrained_100} shows similar results for the same configuration but with the {weak} imposition of the BCs. In both cases, a good representation of the analytical solution is obtained despite being approximated with a smooth function. Additionally, figures~\ref{fig:delta_FEM_strong_100} and ~\ref{fig:delta_FEM_constrained_100} show that the loss functional defines a robust estimation of the error in the pre-asymptotic regime (before the dominance of the oscillation term). A similar behavior is obtained when considering the loss functional with spectral test functions-- omitted here for brevity.
\begin{figure}
    \centering        
     \begin{subfigure}[b]{0.3\textwidth}
         \centering
         \includegraphics[width=\textwidth]{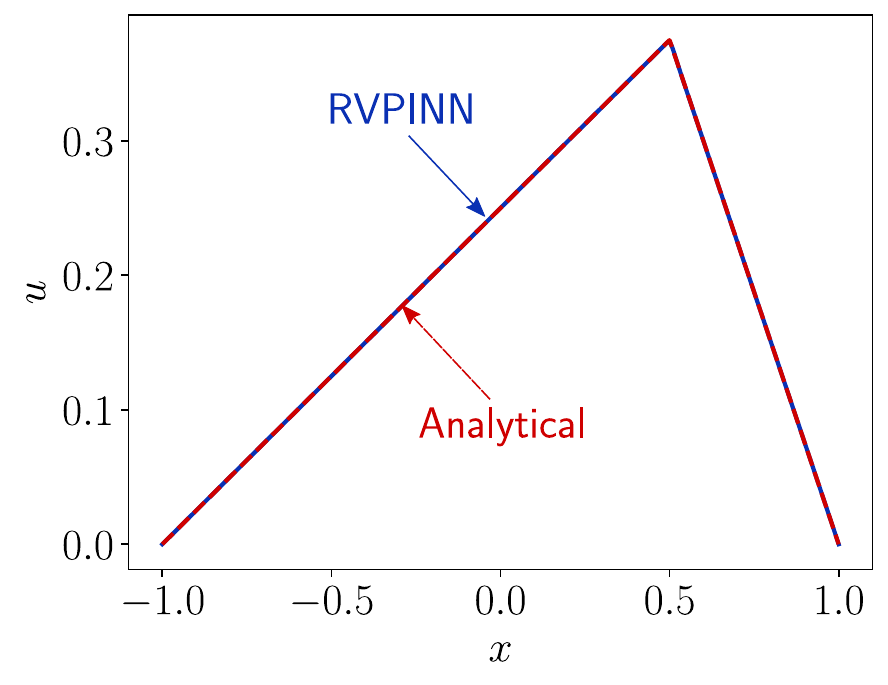}
         \caption{DNN best approximation}
     \end{subfigure}
     \hfill
     \begin{subfigure}[b]{0.3\textwidth}
         \centering
         \includegraphics[width=\textwidth]{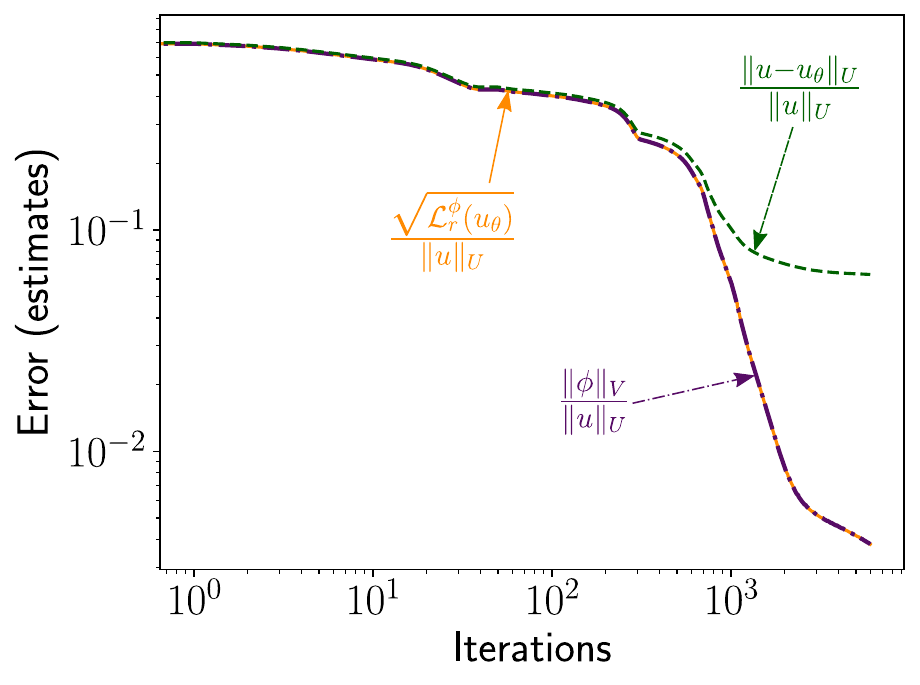}
         \caption{Estimates vs Iterations}
     \end{subfigure}
     \hfill
     \begin{subfigure}[b]{0.3\textwidth}
         \centering
         \includegraphics[width=\textwidth]{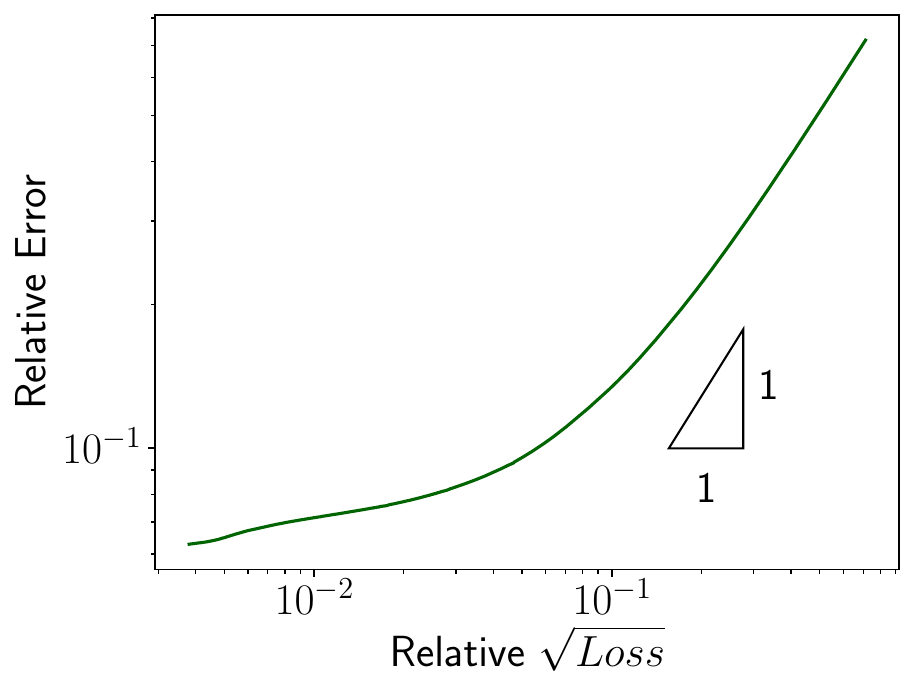}
         \caption{Error-Loss comparison}
     \end{subfigure}
     \hfill
    \caption{RVPINNs approximation of the delta source diffusion problem; strong BCs imposition, $100$ FE test functions.} 
    \label{fig:delta_FEM_strong_100}
\end{figure}

\begin{figure}
    \centering        
     \begin{subfigure}[b]{0.3\textwidth}
         \centering
         \includegraphics[width=\textwidth]{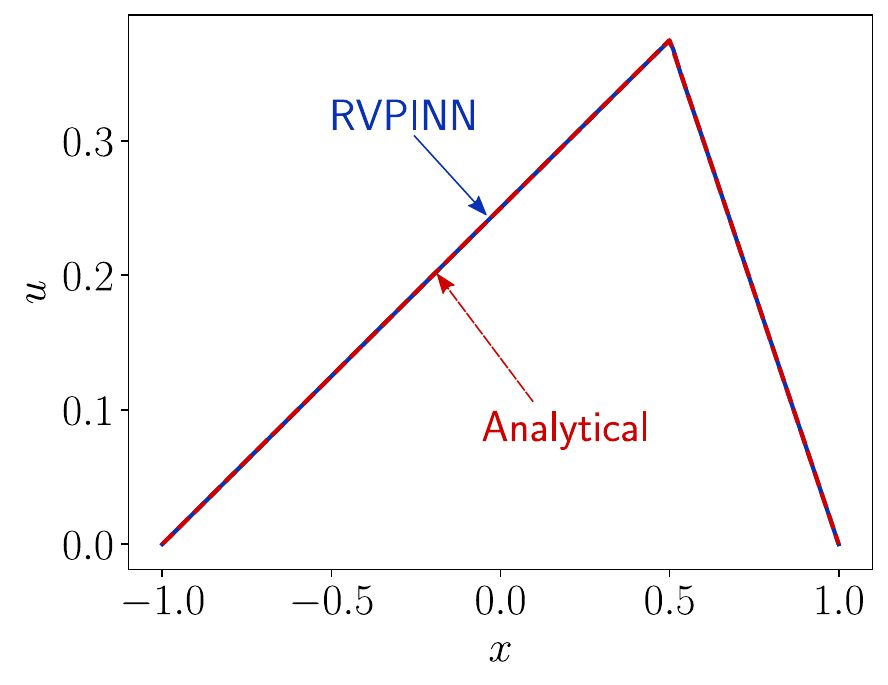}
         \caption{DNN best approximation}
     \end{subfigure}
     \hfill
     \begin{subfigure}[b]{0.3\textwidth}
         \centering
         \includegraphics[width=\textwidth]{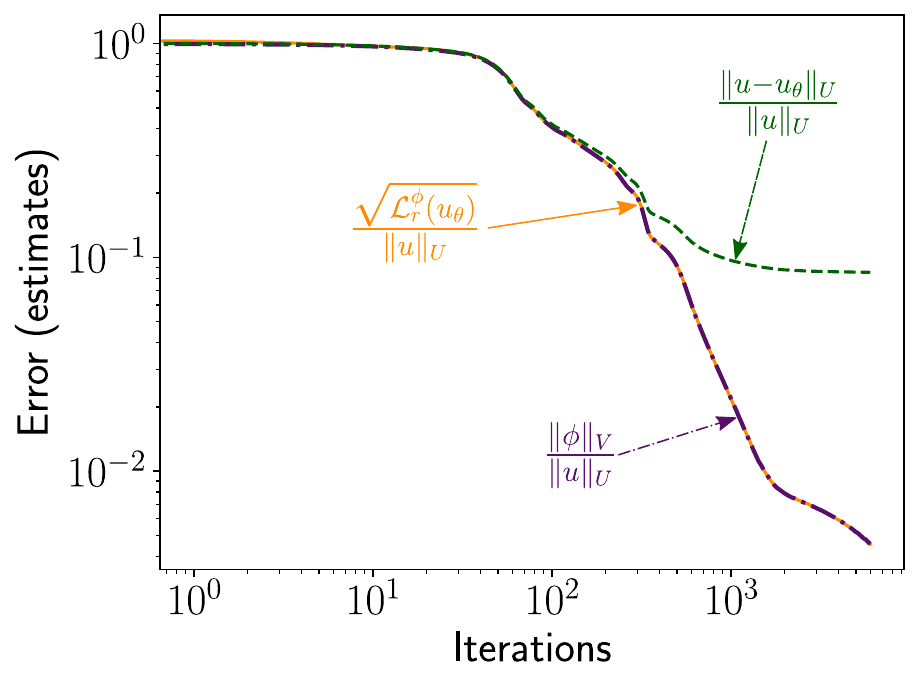}
         \caption{Estimates vs Iterations}
     \end{subfigure}
     \hfill
     \begin{subfigure}[b]{0.3\textwidth}
         \centering
         \includegraphics[width=\textwidth]{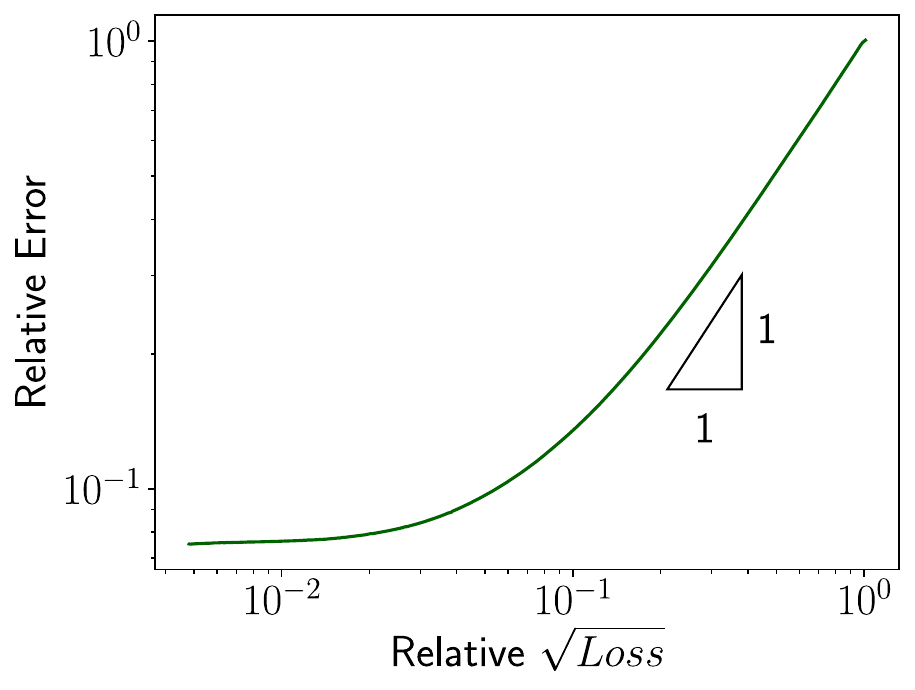}
         \caption{Error-Loss comparison}
     \end{subfigure}
     \hfill
    \caption{RVPINNs approximation of the delta diffusion problem; {weak} BCs imposition, $100$ FE test functions.}
    \label{fig:delta_FEM_constrained_100}
\end{figure}
\subsection{Advection-dominated diffusion problem}
As our last example, we consider problem ~\eqref{eq:weakDAR} in an advection-dominant regime, with the spirit of testing the performance and robustness of the estimates for RVPINNs in near unstable scenarios. For this, we set $\beta=1$, the source term as $f=1$, and consider small values of $\varepsilon$. For a given $\varepsilon$, the analytical solution of ~\eqref{eq:weakDAR} in this case is given by:
\begin{eqnarray*}
    u(x) = \dfrac{2(1-e^{\frac{x-1}{\varepsilon}})}{1-e^{-\frac{2}{\varepsilon}}} + x-1,
\end{eqnarray*}
exhibiting a strong gradient near the right boundary when $\varepsilon \rightarrow 0$ (cf. figures~\ref{fig:ad_strong_01} and \ref{fig:ad_strong_005}). We restrict the discussion to spectral test functions {(figures ~\ref{fig:da_spectral_strong_01}--~\ref{fig:da_spectral_constrained_005})}, as we observe similar behaviors when considering FE test functions.\\
    Figure~\ref{fig:da_spectral_strong_01} shows the results obtained for $\varepsilon = 0.1$, considering $50$ test functions and strong imposition of the BCs. Contrary to the pure diffusive case, Figure~\ref{fig:ad_strong_01_b} reflects that the lines representing the error and the square root of the loss functional (respectively, the norm of the discrete Riesz representative {with respect to the norm of the test space $V\times\mathbb{R}^2$}) do not overlap. This behavior is expected as, in this case, the continuity constant $\mu$ is not equal to one. {Indeed, since the Poincar\'e constant is well-known in 1D, we can explicitly calculate it: $\mu = 1+2/(\varepsilon \pi)$. It deteriorates when $\varepsilon \rightarrow 0$, {i.e., the estimations for this particular setting are not robust in terms of the physical parameter $\varepsilon$}. Nevertheless, as we can see in Figure~\ref{fig:ad_strong_01_c}, our theoretical a posteriori error bounds ~\eqref{eq:aposteriori_lower} and \eqref{eq:aposteriori_reliability_2} are satisfied.} We also observe a decay in the expected optimal correlation between the error and the square root of the loss starting near iteration $3000$, apparently due to stability issues, {that is in agreement with the oscilation term appearing in the a posteriori error estimate \eqref{eq:aposteriori_reliability_2}}. This suboptimality is improved when considering the {weak} imposition of the boundary conditions, as shown in Figure~\ref{fig:da_spectral_constrained_01}. This {weak} imposition of BCs could thus serve as a stabilization technique. This is further confirmed in Figure~\ref{fig:da_spectral_constrained_005}, where we show the results obtained for $\varepsilon=0.005$, with $200$ test basis functions and {weak} BCs. When considering instead the strong imposition of the BCs and the same number of test functions, we do not observe convergence within the first $6000$ iterations--omitted here for brevity. We observe that when considering the {weak} BCs, the loss functional exhibits some differences with the norm of the Riesz representative {in $V$, as we intentionally plot the incomplete estimation to highlight the contribution of weak BC imposition in $\mathbb{R}^2$ of the full Riesz representative (cf. Eq.~\eqref{eq:Gram_full_weak}). Nevertheless, in this case, we still observe a good correlation between the loss and the true error, as expected.}
    
    {To guarantee good estimations in RVPINNs, we need adequate variational formulations and norms for the trial and test spaces, so the resulting inf-sup and boundedness constants are independent of the physical parameters. To exemplify this with a numerical example, consider the following (weighted) alternative variational formulation (with $\Omega:=(-1,1)$)
    \begin{equation*}
        \text{Find } u \in U:=H_0^1(\Omega) : (\varepsilon \, e^{-1/\varepsilon} u^\prime\, , \, v^\prime)_0 = (e^{-1/\varepsilon} f\, , \, v)_0, \quad \forall \, v \in V:=U,
    \end{equation*}
    where the last can be deduced from~\eqref{eq:strongDAR}, by first multiplying by $e^{-1/\varepsilon} v$ (assuming enough regularity for the solution $u$), and then performing integration by parts for the term involving the diffusion term. This alternative variational formulation is bounded and coercive, with constants $\mu=\alpha=1$, with respect to the norms:
    \begin{equation*}
        \|\cdot\|_U^2 := \|\cdot\|_V^2:= (\varepsilon \, e^{-1/\varepsilon} \cdot^\prime\, , \, \cdot^\prime)_0,
    \end{equation*}
    expecting a perfect correlation of the involved quantities as in the pure diffusive case. 
    Indeed, Figure~\ref{fig:da_FEM_weighted} shows the numerical results obtained for $\varepsilon = 0.5$, considering 200 piece-wise linear functions, confirming the robustness of RVPINNs. 
    }
}
\begin{figure}
    \centering        
     \begin{subfigure}[b]{0.3\textwidth}
         \centering
         \includegraphics[width=\textwidth]{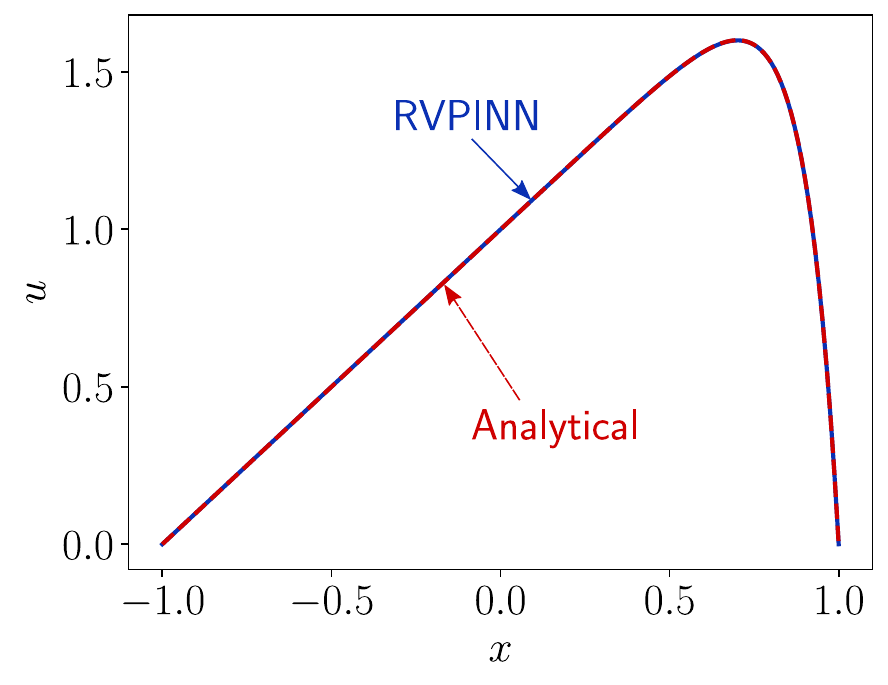}
         \caption{DNN best approximation}
         \label{fig:ad_strong_01}
     \end{subfigure}
     \hfill
     \begin{subfigure}[b]{0.3\textwidth}
         \centering
         \includegraphics[width=\textwidth]{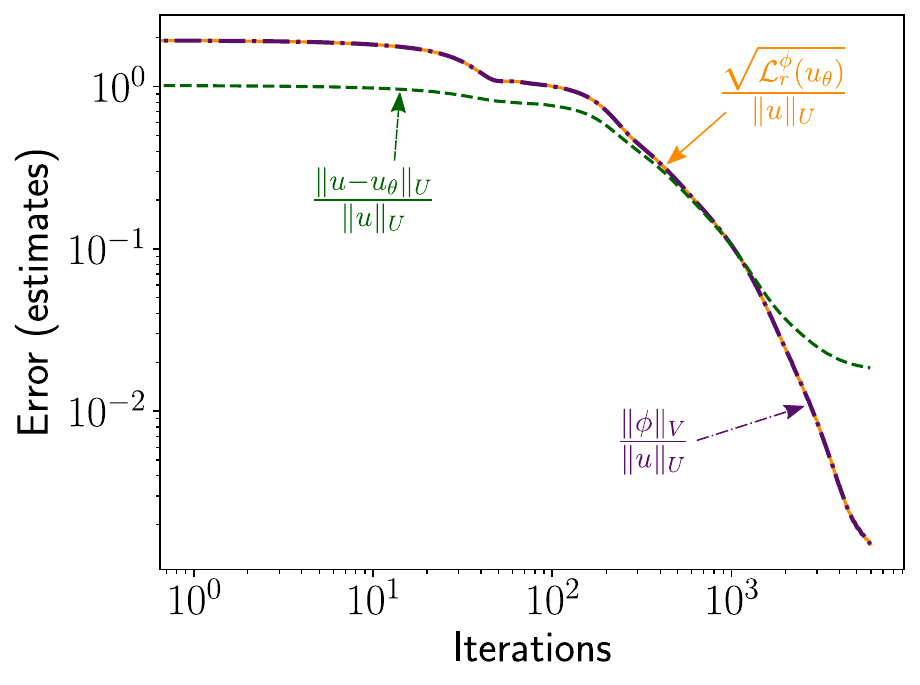}
         \caption{Estimates vs Iterations}
         \label{fig:ad_strong_01_b}
     \end{subfigure}
     \hfill
     \begin{subfigure}[b]{0.3\textwidth}
         \centering
         \includegraphics[width=\textwidth]{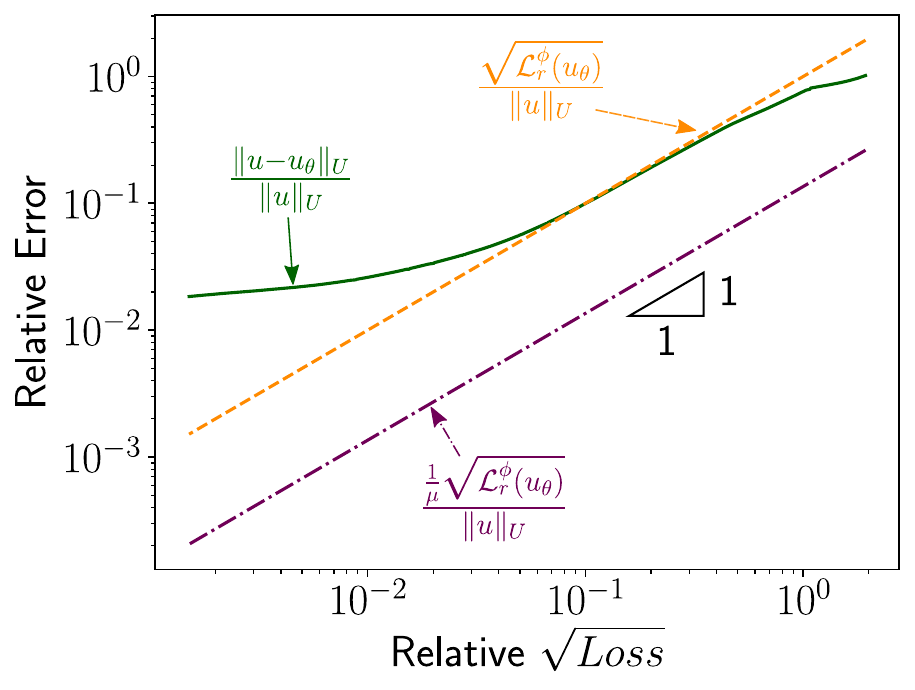}
         \caption{Error-Loss comparison}
         \label{fig:ad_strong_01_c}
     \end{subfigure}
     \hfill
    \caption{RVPINNs approximation of the diffusion-advection problem with $\varepsilon = 0.1$; strong BCs imposition,  $50$ spectral test functions. }
    \label{fig:da_spectral_strong_01}
\end{figure}

\begin{figure}
    \centering        
     \begin{subfigure}[b]{0.3\textwidth}
         \centering
         \includegraphics[width=\textwidth]{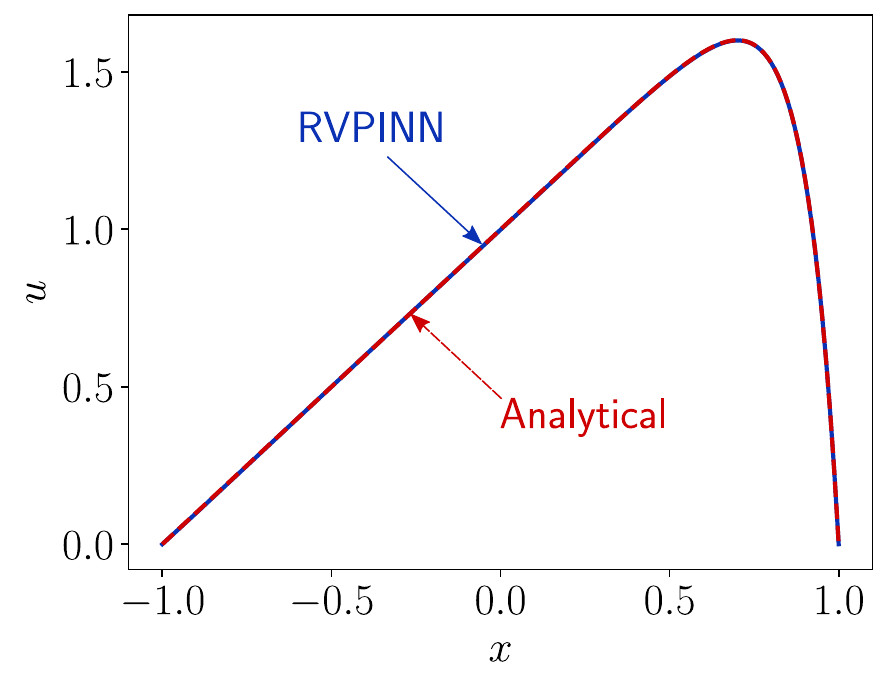}
         \caption{DNN best approximation}
     \end{subfigure}
     \hfill
     \begin{subfigure}[b]{0.3\textwidth}
         \centering
         \includegraphics[width=\textwidth]{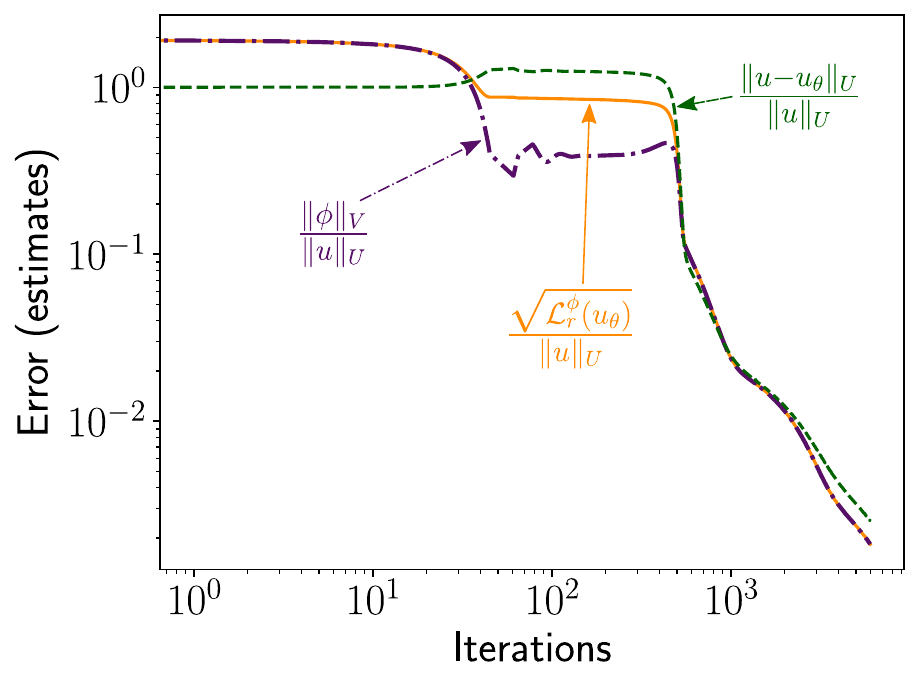}
         \caption{Estimates vs Iterations}
     \end{subfigure}
     \hfill
     \begin{subfigure}[b]{0.3\textwidth}
         \centering
         \includegraphics[width=\textwidth]{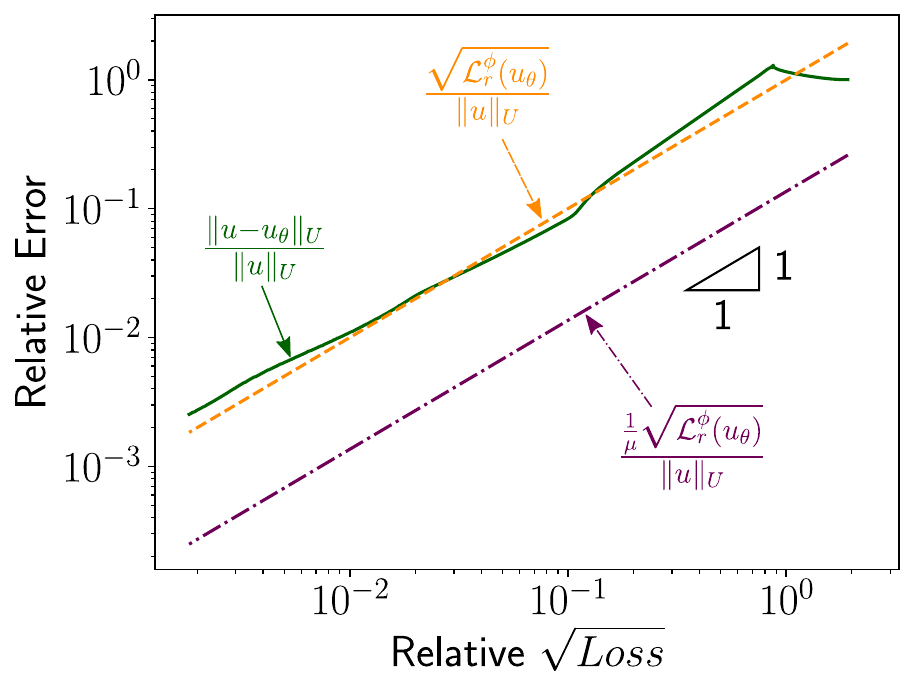}
         \caption{Error-Loss comparison}
     \end{subfigure}
     \hfill
    \caption{RVPINNs approximation of the diffusion-advection problem with $\varepsilon = 0.1$; {weak} BCs imposition, $50$ spectral test functions. }
    \label{fig:da_spectral_constrained_01}
\end{figure}

\begin{figure}
    \centering        
     \begin{subfigure}[b]{0.3\textwidth}
         \centering
         \includegraphics[width=\textwidth]{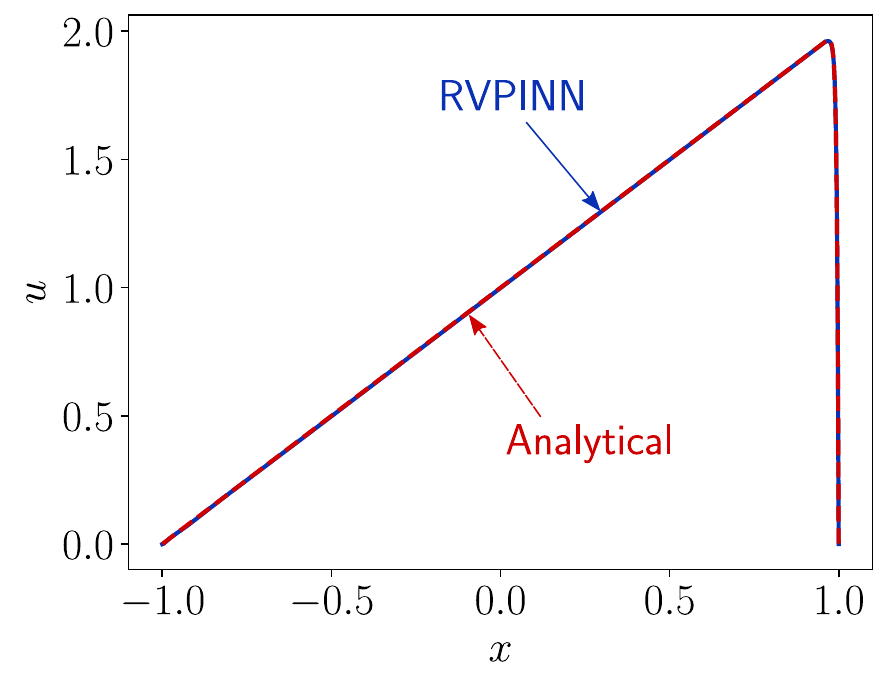}
         \caption{DNN best approximation}
         \label{fig:ad_strong_005}
     \end{subfigure}
     \hfill
     \begin{subfigure}[b]{0.3\textwidth}
         \centering
         \includegraphics[width=\textwidth]{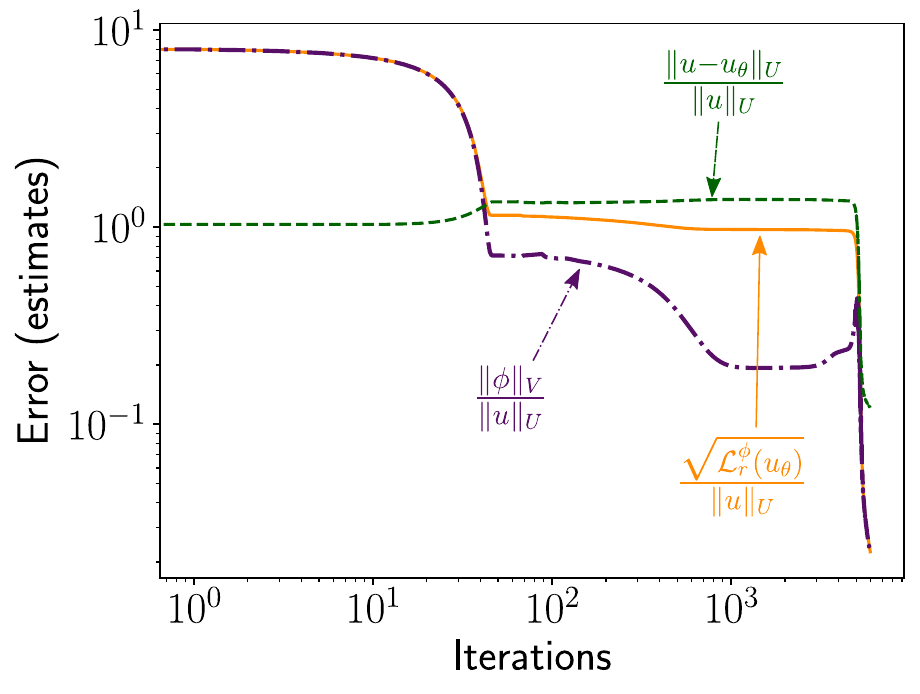}
         \caption{Estimates vs Iterations}
     \end{subfigure}
     \hfill
     \begin{subfigure}[b]{0.3\textwidth}
         \centering
         \includegraphics[width=\textwidth]{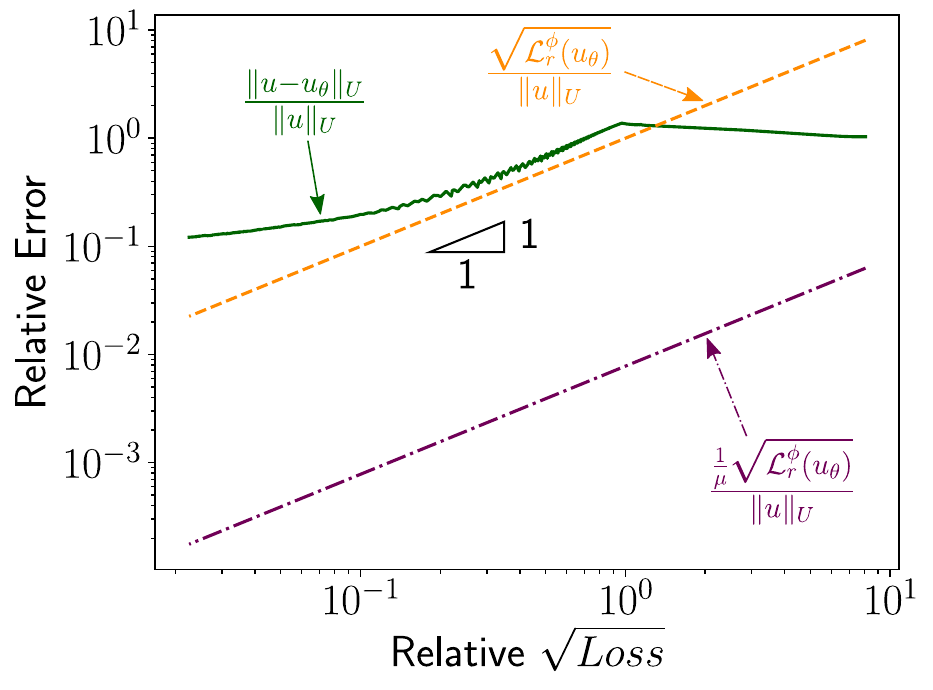}
         \caption{Error-Loss comparison}
     \end{subfigure}
     \hfill
    \caption{RVPINNs approximation of the diffusion-advection problem with $\varepsilon = 0.005$; {weak} BCs imposition, $200$ spectral test functions. }
    \label{fig:da_spectral_constrained_005}
\end{figure}
\begin{figure}
    \centering        
     \begin{subfigure}[b]{0.3\textwidth}
         \centering
         \includegraphics[width=\textwidth]{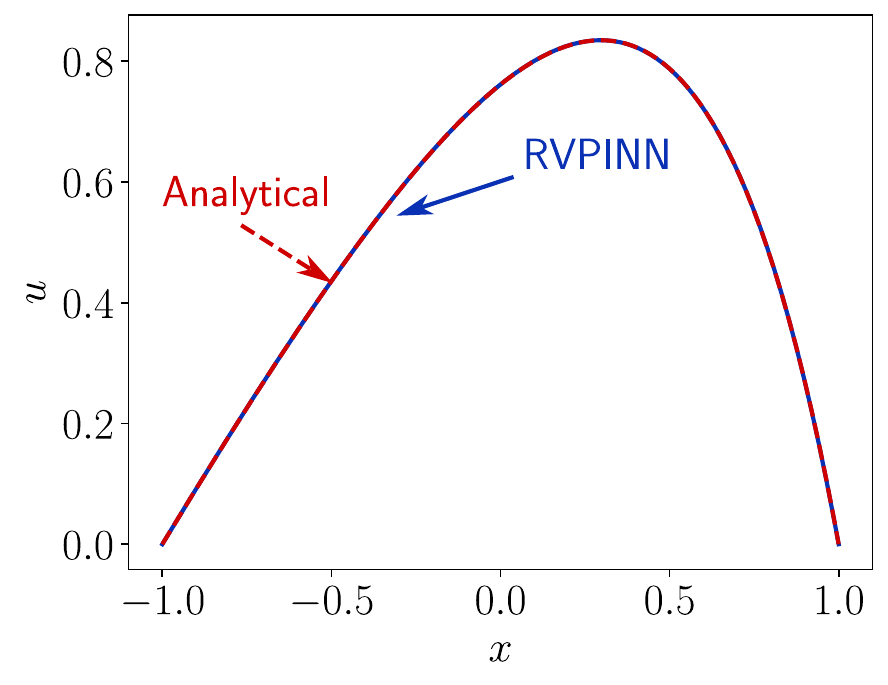}
         \caption{DNN best approximation}
         \label{fig:ad_FEM_ws}
     \end{subfigure}
     \hfill
     \begin{subfigure}[b]{0.3\textwidth}
         \centering
         \includegraphics[width=\textwidth]{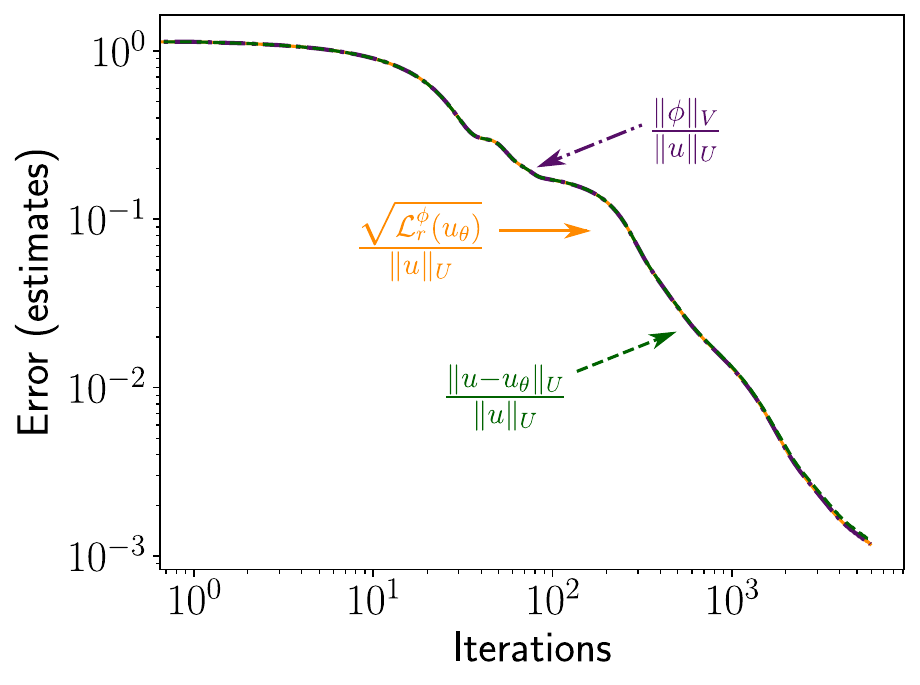}
         \caption{Estimates vs Iterations}
     \end{subfigure}
     \hfill
     \begin{subfigure}[b]{0.3\textwidth}
         \centering
         \includegraphics[width=\textwidth]{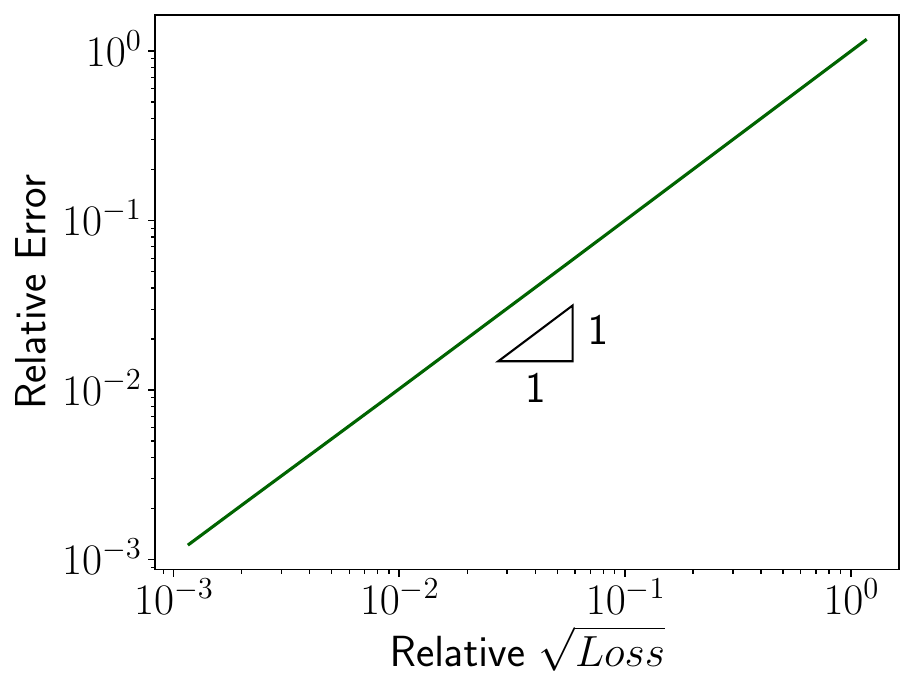}
         \caption{Error-Loss comparison}
     \end{subfigure}
     \hfill
    \caption{RVPINNs approximation of the diffusion-advection problem with $\varepsilon = 0.5$; weighted variational formulation, {strong} BCs imposition, $200$ linear FEM test functions.}
    \label{fig:da_FEM_weighted}
\end{figure}
\section{Conclusions}\label{Sec:Conc}
In this article, we provide a general mathematical framework to construct robust loss functionals based on VPINNs. For that, we first generalize the definition of the loss functional in VPINNs to the choice of a single test function. Then, following a minimum residual principle, we {define} such test function as the Riesz representative of the weak residual over a given discrete test space. We expand the Riesz representation over the discrete test space so the loss functional includes the inversion of the Gram matrix corresponding to the inner product. We prove that the true error in the energy norm is equivalent to the test norm of the residual error estimator. To prove the robustness of the method, we need to {introduce} a test norm that induces the inf-sup stability at the continuous level. We numerically show the robustness of our a posteriori error estimator and also that our methodology is insensitive to the choice of the basis functions in the selected discrete test space.\\ 
Possible future research lines include: (a) the application of the framework to other families of PDEs like wave propagation, nonlinear or time-dependent problems, (b) the extension to nonconforming formulations, (c) the application of RVPINNs to parametric problems with an off-line inversion of the Gram matrix, and (d) an efficiency study of the method for a given variational formulation, NN configuration, and test space. 

\section{Acknowledgments}
The work of Sergio Rojas was done in the framework of the Chilean grant ANID FONDECYT No. 3210009.  Judit Mu\~noz-Matute has received funding from the European Union's Horizon 2020 research and innovation programme under the Marie Sklodowska-Curie individual fellowship grant agreement No. 101017984 (GEODPG). David Pardo has received funding from: the Spanish Ministry of Science and Innovation projects with references TED2021-132783B-I00, PID2019-108111RB-I00 (FEDER/AEI) and PDC2021-121093-I00 (MCIN / AEI / 10.13039/501100011033/Next Generation EU), the “BCAM Severo Ochoa” accreditation of excellence CEX2021-001142-S / MICIN / AEI / 10.13039/501100011033; the Spanish Ministry of Economic and Digital Transformation with Misiones Project IA4TES (MIA.2021.M04.008 / NextGenerationEU PRTR); and the Basque Government through the BERC 2022-2025 program, the Elkartek project BEREZ-IA (KK-2023/00012), and the Consolidated Research Group MATHMODE (IT1456-22) given by the Department of Education.
The work of Maciej Paszy\'nski and Pawe\l{} Maczuga was supported by the program ``Excellence initiative - research university" for the AGH University of Krakow. {The authors would like to thank the anonymous reviewers for their suggestions that helped to improve the presentation of our findings. Additionally, the first author would like to thank professor Kristoffer Van der Zee for his helpful discussions related with the quasi-minimizers definition.}
\bibliographystyle{siam}
\bibliography{RVPINNs}

\end{document}